\numberwithin{equation}{section}
\renewcommand{\a}{\alpha}
\renewcommand{\b}{\beta}
\newcommand{\g}{\gamma}
\newcommand{\G}{\Gamma}
\renewcommand{\d}{\delta}
\newcommand{\D}{\Delta}
\newcommand{\e}{\epsilon}
\newcommand{\z}{\zeta}
\newcommand{\y}{\eta}
\renewcommand{\th}{\theta}
\renewcommand{\k}{\kappa}
\renewcommand{\l}{\lambda}
\renewcommand{\L}{\Lambda}
\newcommand{\m}{\mu}
\newcommand{\n}{\nu}
\newcommand{\x}{\xi}
\renewcommand{\r}{\rho}
\newcommand{\s}{\sigma}
\newcommand{\vs}{\varsigma}
\renewcommand{\t}{\tau}
\newcommand{\vf}{\varphi}
\renewcommand{\o}{\omega}
\renewcommand{\O}{\Omega}
\newcommand{\C}{{\mathbb C}}
\newcommand{\R}{{\mathbb R}}
\newcommand{\Z}{{\mathbb Z}}
\newcommand{\ab}{{\mathbf a}}
\newcommand{\db}{{\mathbf d}}
\newcommand{\Ab}{{\mathbf A}}
\newcommand{\Hb}{{\mathbf H}}
\newcommand{\Ib}{{\mathbf I}}
\newcommand{\Rb}{{\mathbf R}}
\newcommand{\Bc}{{\mathcal B}}
\newcommand{\Dc}{{\mathcal D}}
\newcommand{\Hc}{{\mathcal H}}
\newcommand{\Lc}{{\mathcal L}}
\newcommand{\Pc}{{\mathcal P}}
\newcommand{\Rc}{{\mathcal R}}
\newcommand{\supp}{\hbox{{\rm supp}}\,}
\DeclareMathOperator{\re}{{\rm Re}\,}
\newcommand{\loc}{\operatorname{loc}}
\newtheorem{theorem}{Theorem}[section]
\newtheorem{proposition}[theorem]{Proposition}
\newtheorem{lemma}[theorem]{Lemma}
\newtheorem*{theorem*}{Theorem}
\theoremstyle{definition}
\newtheorem{definition}[theorem]{Definition}
\theoremstyle{remark}
\newtheorem{remark}[theorem]{Remark}
\newcommand{\sasha}[1]{} 
\newcommand{\GR}[1]{}
\begin{document}

\title[Lower eigenvalue bounds]{On  lower eigenvalue bounds for Toeplitz operators with radial symbols in Bergman spaces }

\author[G. Rozenblum]{Grigori Rozenblum}
\address{1. Department of Mathematics \\
                          Chalmers University of Technology \\
                          2.Department of Mathematics  University of Gothenburg \\
                          Chalmers Tv\"argatan, 3, S-412 96
                           Gothenburg
                          Sweden}
\email{grigori@math.chalmers.se}

\begin{abstract}
We consider Toeplitz operators in different Bergman type spaces, having radial symbols with variable sign. We show that if the symbol has compact support or decays rapidly, the eigenvalues of such operators cannot decay too fast, essentially faster than for a sign-definite symbol with the same kind. On the other hand, if the symbol decays not sufficiently rapidly, the eigenvalues of the corresponding operator may decay faster than for the operator corresponding to the  absolute value of the symbol.
\end{abstract}
\keywords{Bergman spaces, Bargmann spaces,
Toeplitz operators}
\date{}

\maketitle


\section{Introduction}\label{intro}

Toeplitz operators arise in many fields of Analysis. The general setting is the following. Let $\Hc$ be a Hilbert space of functions and  $\Bc$ be  a closed subspace in $\Hc$. For a function $V$, called the \emph{symbol} further on, the Toeplitz operator $T_V:\Bc\to\Bc$ acts as $T_V: u\mapsto PVu$, where $P$ is the orthogonal projection from $\Hc$ onto $\Bc$. Of course, it is supposed that the operator of multiplication by $V$ maps $\Bc$ into $\Hc$.

  In the present paper we consider Toeplitz operators in some Bergman type spaces. Let $\O$ be a domain in $\R^d$, or $\C^d$,  $\Hc$ be the space $L^2(\O)$ with respect to some measure  $\m$ and $\Bc$ be the subspace in $\Hc$ consisting of solutions of some elliptic equation or system. The leading example here is provided by   Bergman-Toeplitz operators, where $\O$ is a bounded domain with nice boundary and $\Bc$ consists of harmonic functions in $\Hc$ (the harmonic Bergman space), and, in the complex case,
  $\Bc$ consists of analytical functions in $\O$ (the analytical Bergman space). Another series of  examples is given by Bargmann-Toeplitz operators, where $\O$ is the whole (real or complex) space, $\Hc=L^2(\O)$ with respect to the Gaussian measure and $\Bc$ consists of harmonic or (in the complex case) analytical functions in $\Hc$.

We are interested in the spectral properties of Toeplitz operators for the case when the symbol $V$, which is supposed to be real and bounded, has compact support  (when $\O$ is a ball) or decays rapidly at infinity (when $\O$ is the whole space). One can easily see that such operator is compact, and our question is about determining how fast the eigenvalues of $T_V$ tend to zero. The interest to this topic grew recently due to the close relation of the spectral properties of Toeplitz operators to the spectral analysis  of the perturbed Landau Hamiltonian describing the quantum particle in a homogeneous magnetic field.

For a sign-definite symbol, in the complex Bargmann case, rather complete results were obtained in \cite{RaiWar}, \cite{MelRoz} and, in dimension $d=1$, improved in \cite{FilPush}, see also references therein. Even earlier, the case of complex Bergman spaces  in dimension $d=1$ has been studied in  \cite{Parf}. It was proved that the eigenvalues of the Toeplitz operator follow an asymptotic law, of an exponential type  for the Bergman case and super-exponential type for the Bargmann one.

For the case of the symbol $V$  having variable sign,  it was for a long time unclear, even whether it is possible that the positive and negative parts $V_\pm$ of $V$ can compensate each other almost completely, so that the spectrum of $T_V$ is finite. It has been proved only recently, see \cite{Lue2}, that such complete cancelation is impossible, in other words, for a nontrivial symbol $V$ with compact support  the Toeplitz operator cannot have finite rank; see \cite{RozToepl} for the most complete
 results on the finite rank problem and related references. A further analysis in  \cite{PushRoz2}  has shown that the (infinite now) spectrum of the Toeplitz operator depends essentially on the geometry of the support of $V_\pm$. In particular, if, say, the support of $V_+$ surrounds the support of $V_-$ (in a proper meaning) then the negative spectrum of $T_V$ is finite and the asymptotic behavior  of  the positive eigenvalues is the same as if $V_-$ were absent.  On the other hand, if $V_\pm$ are supported in geometrically well separated sets, then both the positive and the negative spectra of $T_V$ are infinite, and, taken together, obey the same asymptotic law as if $V$ were sign-definite (in the Bargmann case); these results can be understood that no cancelation of $V_\pm$ takes place for this class of symbols, as it concerns the properties of the spectrum.

In the present paper we continue the study of the spectrum of Toeplitz operators with non-sign-definite symbol. We consider the
model case of the symbol $V$ being radial, i.e., depending only on the the distance to the origin; no  restrictions on  the supports of $V_\pm$ are imposed.  We find out that there is essential difference in the spectral properties of Toeplitz operators with rapidly decaying symbols (including compactly supported ones, the exact definitions are given in the paper), on the one hand, and  symbols decaying not that rapidly, on the other. In  the former case we establish that, although no information on the positive and negative spectra of $T_V$ separately can be obtained,  the distribution function of the positive and negative eigenvalues counted together, i.e. of  singular numbers of the operator,  is subject to lower asymptotical bounds that have the same order and even the same coefficient as if the symbol were sign definite. This is expressed, in a general form, by the relation \eqref{EstimGeneral}. So, again, no cancelation happens.  On the other hand, in the latter case it is possible that the eigenvalues of the Toeplitz operator decay considerably faster than for the operator with the corresponding sign-definite symbol.

We start in Section \ref{Sect2}  with describing the Bergman type spaces under consideration (we consider operators in the Bergman and Bargmann spaces of analytical and harmonic functions as well as in the spaces of solutions of the Helmholtz equation) and finding the expression for the eigenvalues. These expressions are quite explicit. The asymptotic formulas for eigenvalues are obtained, first for $V$ being the characteristic function of a ball, and then these results are carried over to general sign-definite compactly supported  case by means of simple monotonicity arguments. For Bargmann spaces a class of symbols with rapid decay at infinity is  considered as well.
Some of these results are well known, the remaining are obtained in a more or less standard way -- we, however, present them  all here for further reference.   In the end of  Section \ref{Sect2} we show that, similarly to the results in \cite{PushRoz2}, the same asymptotics holds even for non-sign-definite symbols, as long as geometrically the support of $V_+$ surrounds the support of $V_-$ (or the other way round).

Passing to  general  non-sign-definite radial symbols,   we encounter a serious inconvenience. We still can write the explicit expression for the eigenvalues, however   the numbering of the eigenvalues in the non-increasing or the
non-decreasing order does not coincide with their natural numbering, stemming from  the one in the  separation of variables, and  the relation between two numberings is rather hard to control. To handle this circumstance, we need  certain considerations from infinite combinatorics (Proposition \ref{Prop.reord.2}).
In order to prove that the eigenvalues cannot decay too fast, we need rather advanced results in complex analysis, and these results are also presented in Sect \ref{Sect3}. The main results of the paper on the lower eigenvalue estimates for general non-sign-definite radial symbols are presented in Sect.\ref{Sect4}, see Theorems \ref{ThMainBergmanC}, \ref{ThEstBargC}. Finally, in Sect.\ref{Sect5}, we describe  examples showing that a considerable cancelation may take place for not sufficiently rapidly  decaying fast oscillatory symbols.
\section {Eigenvalues of Toeplitz operators with  radial weight}\label{Sect2}

In this Section we calculate the eigenvalue asymptotics for Toeplitz  operators with radial symbols in the spaces under consideration. For sign-definite symbols some of these results are known, others are obtained in a standard way using the explicit expression for eigenvalues.
Further on, we extend these results to a class of symbols, not necessarily sign-definite, but having a constant sign  at the periphery of the support.
\subsection{Eigenvalues and  re-ordering -- 1}\label{SSreordering1}
For each of operators $T_V$ under consideration in the paper,  we are going to find explicitly the sequence of eigenvalues $\L_k$ having multiplicities $\db_k$. To describe the ordered set of eigenvalues, counting multiplicities, one should consider the set of the numbers $\L_k$, each $\L_k$ counted $\db_k$ times, and then re-order  the positive numbers in the non-increasing way and the negative ones in the nondecreasing way. Thus we obtain two (finite or infinite) sequences  $\l_n^{\pm}$ of eigenvalues of $T_V$. The union of the the sequences $\pm\l_n^{\pm}$ is the sequence of $s$-numbers, numerated in the nonincreasing order $s_n=s_n(T_V)$.

For problems under consideration, it is often more convenient to describe the spectrum by means of  the counting functions defined as
\begin{equation}\label{counting}
    n_\pm(\l)=\#\{n:\pm\l_n^{\pm}>\l\}, \ n(\l)=n_+(\l)+n_-(\l)=\#\{n:s_n>\l\};
\end{equation}
we will include the designation of the operator and the space  in question in the notation, when needed.
In the terms of multiplicities, we, obviously, have
\begin{equation}\label{counting multi}
    n_\pm(\l)=\sum_{\pm\L_k>\l}\db_k, \ n(\l)=\sum_{|\L_k|>\l}\db_k.
\end{equation}

In most simple cases below, the sequence $\L_k$ is non-negative and already non-increasing, and thus no re-ordering is needed. However, generally, we should not expect that the sequence $|\L_k|$ is monotonous, and thus the question arises, how the estimates for $|\L_k|$ are related to estimates of this sequence monotonously re-ordered.  In one direction, the result is obvious. We, however, formulate it in order to be able to refer to it later on.
\begin{proposition}\label{PropReorderTriv}Let $a_k, b_k, \ k=0,1,\dots ,$ be two sequence of real numbers, so that $b_k>0,$ $b_k\to 0$ monotonously. Suppose that $|a_k|\le b_k$. Denote by $a^*_k$ the sequence obtained by the non-increasing re-ordering of the sequence $|a_k|$. Then $a^*_k\le b_k.$
\end{proposition}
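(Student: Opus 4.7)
The plan is a one-step counting argument via the distribution function. The key observation is that $a_k^{*}$ is, by definition, the $(k+1)$-st largest value in the multiset $\{|a_j|\}_{j\ge 0}$, so to conclude $a_k^{*}\le b_k$ it suffices to show that at most $k$ terms of $\{|a_j|\}$ strictly exceed $b_k$.

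First I would fix $k$ and introduce the level set
\[
S_k = \{\, j \ge 0 : |a_j| > b_k \,\}.
\]
For every $j \ge k$, the monotonicity of $(b_j)$ combined with the hypothesis $|a_j|\le b_j$ gives $|a_j|\le b_j \le b_k$, hence $j\notin S_k$. Therefore $S_k \subseteq \{0,1,\ldots,k-1\}$, so in particular $|S_k|\le k$.

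Since passing to the non-increasing rearrangement preserves the distribution function, one has $\#\{j : a_j^{*} > t\} = \#\{j : |a_j| > t\}$ for every $t > 0$. Applying this with $t = b_k$ and using the bound just established yields $\#\{j : a_j^{*} > b_k\}\le k$, so the $(k+1)$-st member of the non-increasing sequence $(a_j^{*})$, namely $a_k^{*}$, satisfies $a_k^{*}\le b_k$, which is the claim.

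There is no real obstacle in this argument; the only thing to be careful about is the indexing convention (starting at $0$ versus $1$), but expressing the rearrangement through the distribution function $\#\{\,\cdot\,\}$ sidesteps any off-by-one subtlety. The strict monotonicity of $(b_k)$ is not needed — weak monotonicity suffices — and the hypothesis $b_k\to 0$ is used only implicitly, to ensure that the rearrangement $(a_k^{*})$ is well defined as a sequence tending to zero.
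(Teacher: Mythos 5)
Your argument is correct. The paper offers no proof of this proposition at all (it is declared ``obvious'' and recorded only for later reference), and your distribution-function argument --- showing $\#\{j: |a_j|>b_k\}\le k$ via monotonicity of $(b_j)$ and then transferring this count to the rearranged sequence --- is exactly the standard way to make the claim precise, with the indexing handled cleanly.
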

Of course, generally, one should not expect the direct conversion of Proposition \ref{PropReorderTriv} to be correct: an estimate for the monotonously re-ordered sequence cannot be carried over to  the initial sequence.
It turns out, however, that in a certain sense Proposition \ref{PropReorderTriv} can be partially conversed, see Proposition \ref{prop.reord.major}.

The operators we consider in this paper have very fast, exponential or even super-exponential rate of decay of eigenvalues. Since these eigenvalues have very high multiplicity, the eigenvalues $\l_n^{\pm}$, counting multiplicity, do not follow any regular asymptotic law, unlike the well-studied case  of elliptic operators. Therefore it is more convenient to consider the behavior of the eigenvalues in the logarithmic scale, where the oscillations caused by high multiplicities are suppressed and a regular asymptotics exists.  In this scale, in particular, the leading term of the asymptotics does not change when the symbol $V$ (and thus the eigenvalues of the Toeplitz operator) is multiplied by a positive constant.  Alternatively, the asymptotical behavior of eigenvalues can be described by their counting function. Unlike the case of power-like asymptotics, typical for elliptic boundary problems, the asymptotic formula for the counting function is not equivalent to the one for the eigenvalues, however it is equivalent to the asymptotic eigenvalue formula in the logarithmic scale. We will  use this equivalence persistently.

We will use the following notation. For functions $f$ and $g$ of a real or integer argument $t$, the symbol $f(t)\sim g(t)$ means, as usual, $f(t)/g(t)\to 1$ as $t\to\infty$ or $t\to 0$, which is always clear from the context. The relation $f(t)\lesssim g(t)$ means that $\limsup f(t)/g(t)\le1$, the
 obvious meaning has the notation $f(t)\gtrsim g(t)$. Finally, $f(t)\asymp g(t)$ is used when $c f(t)\le g(t)\le C f(t)$ for sufficiently large (or small) $t$ and for some positive constants $c,C.$

\subsection{Bergman type spaces and quadratic forms}
\subsubsection{The spaces.}
The following Bergman type spaces will be considered in this paper.
\begin{enumerate}
\item Bergman spaces. \begin{itemize}
\item The Bergman spaces of analytical functions in the ball. Let $D^{2d}\subset\C^d, d\ge1$ be the  ball with radius $\Rb$. The space   $\Hc$ is the space $L^2(D^{2d})$ with respect to the Lebesgue measure  and $\Bc=\Bc_\Rb^{\C}\subset\Hc$ consists of analytical functions.
    \begin{remark}\label{Another Weight} In the literature,  Bergman spaces with the Lebesgue measure with weight $(1-(|z|/\Rb)^2)^\a$ are considered as well. The results of the paper extend to this case almost automatically.
    \end{remark}
\item The Bergman spaces of harmonic functions in the ball.  Let $D^d\subset \R^d, d>1,$ be the ball with radius $\Rb$.  The space $\Hc$ is $L^2(D^d)$ with respect to the Lebesgue measure and $\Bc=\Bc_\Rb^{\R}\subset\Hc$ consists of harmonic functions.
\item The Bergman spaces of solutions of the Helmholtz equation. The space $\Hc$ is, again, $L^2(D^d)$ and $\Bc =\Bc_\Rb^{\Hb}\subset\Hc$ consists of solutions of the Helmholtz equation $\D u+ u=0$.\end{itemize}
\item  Bargmann spaces.
\begin{itemize}
\item The Bargmann spaces of analytical functions in $\C^d$. Here $\Hc$ is $L^2(\C^d)$ with respect to the Lebesgue measure with Gaussian weight $e^{-|z|^2}$ and $\Bc=\Bc^\C\subset\Hc$ consists of analytical functions.
\item The Bargmann spaces of harmonic functions in $\R^d$. Here $\Hc$ is $L^2(\R^d)$ with respect to the Lebesgue measure with Gaussian weight $e^{-|x|^2}$ and $\Bc=\Bc^\R\subset\Hc$ consists of harmonic functions.
\item The Bargmann spaces of solutions of the Helmholtz equation in $\R^d$. Here     $\Hc$ is $L^2(\R^d)$ with respect to the Lebesgue measure with Gaussian weight $e^{-|x|^2}$ and $\Bc=\Bc^{\Hb}\subset\Hc$ consists of solutions of the Helmholtz equation.
    \end{itemize}
\item The Agmon-H\"ormander space $\Hc=B^*$, see \cite{AgmonHorm}, \cite{Strich}, is defined as consisting of (equivalence classes of) functions $u\in L^2_{\loc}(\R^d)$ such that the norm
    \begin{equation}\label{AHspace}
         \|u\|_{B^*}=\left(\sup_{r\in(0,\infty)}r^{-1}\int_{|x|<r}|u|^2dx\right)^{\frac12}
    \end{equation}
    is finite. All functions $u$ in this space, satisfying the Helmholtz equation, form a closed subspace $\Bc=\Bc^{\Ab\Hb}$. For $u\in \Bc^{\Ab\Hb}$, the limit     \begin{equation}\label{AKlim}
|||u|||^2=\lim_{r\to\infty}r^{-1}\int_{|x|<r}|u|^2dx
\end{equation}
exists, and it defines the norm equivalent to \eqref{AHspace} (see \cite{Strich}, Lemma 3.2.)
\end{enumerate}
\subsubsection{Toeplitz operators and quadratic forms}
The convenient way to study the eigenvalues of Toeplitz operators is by using the quadratic form setting. Let $\Bc\subset\Hc$ be the Bergman type space under study and $<\cdot,\cdot>$ and $\|\cdot\|$ be the corresponding
scalar product and norm. The Toeplitz operator $T_V$ in $\Bc$ is defined by the quadratic form $h_V[u]=<Vu,u>,\  u\in\Bc $. It is convenient to use this definition even in the case when one does not consider the embracing space, as, for example, for $\Bc=\Bc^{\Ab\Hb}$: having a Bergman type space $\Bc$ we will still call the operator $T_V$ defined by the quadratic form $\int V|u|^2 d\m$ in $\Bc$ the Toeplitz
operator in $\Bc$ with symbol $V$. As soon as  a complete system of functions $u_n\in \Bc$ is found, which diagonalizes both quadratic forms $\|u\|^2$ and $h_V[u]$, this system can serve as a complete system of eigenfunctions of $T_V$, with eigenvalues $h_V[u_n]/\|u_n\|^2$. We emphasize here again that these eigenvalues should be properly re-ordered.

In this paper we are going to study Toeplitz operators with radial symbols, i.e., $V(z)=V(|z|)$ in the complex case and $V(x)=V(|x|)$ in the real case. For such symbols the eigenfunctions and eigenvalues of the Toeplitz operator can be found explicitly by means of passing to spherical co-ordinates.

\subsection{Operators in Bergman spaces}
 In the  sections to follow we collect the results on the eigenvalue asymptotic formulas for Toeplitz operators in Bergman spaces. Some of them are known, the rest are obtained in a standard way, using the explicit  expressions for the eigenvalues.
 \subsubsection{Complex Bergman spaces.}\label{ss.cBergman} Denote by $\Pc^\C_k$ the space of homogeneous polynomials of degree $k$ of variables $z_1,\dots, z_d$. It has dimension $\db_k^{\C}=\binom{k+d-1}{d-1}= \frac{k^{d-1}}{(d-1)!}(1+O(k^{-1}))$.  In the space of  functions of the form $Z(\o)=p(z)|z|^{-k}$, $p\in\Pc^\C_k$, $\o=z|z|^{-1}\in S^{2d-1}$, we choose a basis $Z_{k,j}(\o)$, $j\in[1,\db_k^\C]$, orthonormal with respect to the Lebesgue measure on the sphere $S^{2d-1}$ (complex spherical functions). The functions $u_{k,j}(z)=|z|^kZ_{k,j}(\o), \ k=0,1,\dots, \ j\in[1,\db_k^\C],$ form an orthogonal  basis in the space $\Bc_\Rb^{\C}$. For a radial function $V(|z|)$, this system of functions diagonalize also the quadratic form $\int V(|z|)|u(z)|^2 d\m$. Therefore, the functions $u_{k,j}$ form a complete system of eigenfunctions of the Toeplitz operator $T_V$ in $\Bc_\Rb^{\C}$ with eigenvalues
\begin{equation}\label{Bergman C}
    \L_k=\L_k^\C(V)=\frac{<Vu_{k,j},u_{k,j}>}{\|u_{k,j}\|^2_{\Bc_{\Rb}^{\C}}}=(2k+2d)\Rb^{-(2k+2d)}\int_0^\Rb V(r)r^{2k+2d-1}dr,
\end{equation}
having multiplicity $\db_k^{\C}$. By re-ordering, see Sect.\ref{PropReorderTriv}, we obtain two sequences  $\l_n^{\pm}$ of eigenvalues of $T_V$. By the results of \cite{AlexRoz} (see also  \cite{RozToepl}), at least one of these sequences is infinite.

Let $V$ be the characteristic function of the ball $D_b$ with center at the origin and radius $b\in(0,\Rb)$. Then there are no negative eigenvalues, and the positive eigenvalues, by \eqref{Bergman C}, are $\L_k=(b/\Rb)^{2k+2d}$ with multiplicities $\db_k^{\C}$. Taking into account the asymptotics $\db_k^{\C}\sim k^{d-1 }/(d-1)!$, we have
in terms of the counting function,
\begin{equation}\label{asymp.compl.count}
    n(\l; T_V,\Bc^\C_\Rb)\sim \sum\limits_{(b/\Rb)^{2k+2d}>\l}\frac{k^{d-1}}{(d-1)!}\sim (d!)^{-1}(2|\log(b/\Rb)|)^{-d}|\log\l|^d, \ \l\to 0,
\end{equation}
or,  in the logarithmic scale,
\begin{equation}\label{asymp.compl}
    \log(\l_n^+)=\log(s_n(T_V))\sim2 (nd!)^{\frac1d}\log (b/\Rb).
\end{equation}

\subsubsection{Harmonic Bergman spaces}\label{ss.hBergman}
Denote by $\Pc^\R_k$ the space  of degree $k$ homogeneous  harmonic polynomials of the variables $x_1,\dots, x_d$. This space has dimension $\db_k^\R=\binom{d+k-1}{d-1}-\binom{d+k-2}{d-2}= \frac{2}{(d-2)!}k^{d-2}(1+O(k^{-1}))$ (see, e.g., calculations in \cite{Shubin}, Sect.22). In the space of  functions of the form $Y(\o)=p(x)|x|^{-k}$, $p\in\Pc^\R_k$, $\o=x|x|^{-1}\in S^{d}$, we choose a basis $Y_{k,j}(\o)$, $j\in[1,\db_k^\R]$, orthonormal with respect to the Lebesgue measure on the sphere $S^{d-1}$, i.e., the usual spherical functions. The functions $u_{k,j}(x)=|x|^kY_{k,j}(\o), k=0,1,\dots, \ j\in[1,\db_k^\R],$ form an orthogonal  basis in the space $\Bc_\Rb^{\R}$. For a radial function $V(|x|)$, this system of functions diagonalizes also the quadratic form $\int V(|x|)|u(x)|^2 d\m$. Therefore, the functions $u_{k,j}$ form a complete system of eigenfunctions of the Toeplitz operator $T_V$ in $\Bc_\Rb^{\R}$ with eigenvalues $\L_k^\R$ given by
\begin{equation}\label{Bergman R}
     \L_k=\L_k^\R(V)=(2k+d)\Rb^{-2k-d}\int_0^\Rb V(r)r^{2k+d-1}dr
\end{equation}
and multiplicities $\db_k^\R$. Taking into account the  multiplicities and reordering, as in Section \ref{ss.cBergman}, we obtain the eigenvalue sequences $\l_n^{\pm}$ and the sequence of $s-$numbers $s_n$.
Again, as it is shown in \cite{AlexRoz},  the sequence $s_n$ and at least one of the sequences $\l_n^{\pm}$   are infinite.

For $V$ being the characteristic function of the ball $D_b$, the eigenvalues $\L_k^\R$ are equal to $\L_k^\R=(b/\Rb)^{2k+d}$, by \eqref{Bergman R}. Thus, there are no negative eigenvalues $\l_n^-$, while for the positive eigenvalues $\l_n^+$ we have the asymptotics
\begin{equation}\label{asymp.harm}
    n(\l; T_V,\Bc^\R_\Rb)\sim \sum\limits_{(b/\Rb)^{2k+d}>\l}2\frac{k^{d-2}}{(d-2)!}\sim 2((d-1)!)^{-1}(2|\log(b/\Rb)|)^{-d+1}|\log\l|^{d-1}, \ \l\to 0,
\end{equation}
and, in the logarithmic scale,

\begin{equation}\label{asympt.Harm.l}
    \log\l_n^+\sim 2\log(b/\Rb)((d-1)!/2)^{\frac{1}{d-1}}n^{\frac{1}{d-1}}, \ n\to\infty.
\end{equation}

\subsubsection{Helmholtz Bergman spaces}\label{ss.HelmBergman}
After passing to spherical co-ordinates in the Helmholtz equation,
we arrive at the orthogonal system of functions
 \begin{equation}\label{SystemHelm}
 u_{k,j}(x)=Y_{k,j}(\o)|x|^{-\frac{d-2}{2}}J_{k+\frac{d-2}{2}}(|x|); \ \o=x|x|^{-1}\in S^{d-1}, k = 0,1,\dots, \ j=1,\dots, \db_k^\R,
 \end{equation}
 where $J_\n(r)$ are the Bessel functions and $Y_{k,j}$ are the real spherical functions as in Sect.\ref{ss.hBergman}. For a radial symbol $V(|x|)$, the eigenvalues of the Toeplitz operator equal
 \begin{equation}\label{EigenHelm}
    \L_k=\L_k^{\Hb}(T_V)=\frac{\int_0^\Rb V(r)J^2_{k+\frac{d-2}{2}}(r)rdr}{\int_0^\Rb J^2_{k+\frac{d-2}{2}}(r)rdr},
 \end{equation}
 with multiplicity $\db_k^\R$.
 The integral in the denominator in \eqref{EigenHelm} is estimated by means of the identity (see, e.g., \cite{Watson})
 \begin{equation}\label{BesselIntegr}
    \int_0^R J^2_\n(r) r dr=\frac{R^2}{2}[J^2_\n(R)-J_{\n-1}(R)J_{\n+1}(R)],
 \end{equation}
 and the asymptotics (see, again \cite{Watson}), uniform in $r$ on any finite interval $[a,b]\subset[0,\infty)$:
\begin{equation}\label{BesselAs}
  J_\n(r)\sim\left(\frac{r^2}{2}\right)^{\n}(\G(\n+1))^{-1}, \ |\n|\to +\infty, \re \n\ge0.
\end{equation}
So, we obtain
\begin{equation}\label{BesselintAs}
    \int_0^\Rb J^2_{k+\frac{d-2}{2}}(r)rdr\sim \left(\frac{\Rb^2}2\right)^{k+d/2}\frac{1}{\G(k+\frac{d}{2})\G(k+\frac{d+2}{2})}, \ k\to \infty.
\end{equation}

Again, as before, the numbers $ \L_k^{\Hb},$ counted with multiplicities $\db_k^\R$ and properly re-ordered, form the sequences $\l_n^\pm=\l_n^\pm(T_V)$ of eigenvalues of $T_V$, and the union of the sequences $\pm\l_n^\pm$ is the sequence of $s$-numbers $s_n=s_n(T_V)$. It is proved in \cite{RozToepl} that for $d>2$ at least one of the sequences $\l_n^\pm$ is infinite. The proof in \cite{RozToepl} does not cover the case $d=2$,  and the above infiniteness will follow from the results of the present paper.

For $V$ being the characteristic function of the ball $|x|\le b<\Rb$, the numbers $\L_k^{\Hb}(V)$ have, by \eqref{BesselIntegr}, \eqref{BesselAs}, and \eqref{BesselintAs}, the asymptotics
\begin{equation}\label{HelmCharAs}
    \L_k^{\Hb}\sim (b/\Rb)^{2k+d }.
\end{equation}
Therefore, taking into account multiplicities,
the eigenvalues $\l_n^+$ obey the asymptotic law \eqref{asymp.harm}, \eqref{asympt.Harm.l}, the same  as for the harmonic Bergman space.
\subsection{Operators in Bargmann and AH spaces}

\subsubsection{Complex Bargmann spaces}\label{ss.cBargmann} The functions $u_{k,j}(z)=Z_{k,j}(\o)|z|^k, $  $j\in[1,\db_k^\C],$ form an orthogonal basis in the Bargmann space $\Bc^{\C}.$ Thus, the eigenvalues of the Toeplitz operator $T_V$ in $\Bc^{\C}$ equal
\begin{equation}\label{CBargm}
    \L_k^\C
    =\frac{\int_0^\infty V(r)r^{2k+2d-1}e^{-r^2}dr}{\int_0^\infty r^{2k+2d-1}e^{-r^2}dr}=2\frac{\int_0^\infty V(r)r^{2k+2d-1}e^{-r^2}dr}{\G(k+d)}.
\end{equation}
For the case of $V(r)$ being the  characteristic function of $D_b$, $0<b<\infty$, obviously,
$$\left|\log\int_0^\infty V(r)r^{2k+2d-1}e^{-r^2}dr\right|\asymp k,$$
and, therefore, by  the Stirling formula,
\begin{equation}\label{BargLambda}
   | \log \L_k^{\C}|\sim k\log k.
 \end{equation}
Taking into account the multiplicities, we obtain
for the eigenvalues of $T_V$:
\begin{equation}\label{CBargm.eigenvaluesN}
    n(\l)=\sum_{\L_k^\C>\l}\db_k^\C\sim \frac1{d!}\left(\frac{|\log \l|}{\log|\log \l|}\right)^d,
\end{equation}
or, in the logarithmic scale, inverting \eqref{CBargm.eigenvaluesN}:
\begin{equation}\label{CBargm.eigenvalues}
    \log(\l_n^+)=\log(s_n)\sim - d^{-1}(d!)^{\frac1d} n^{\frac1d}\log n.
\end{equation}
\begin{remark}\label{remWithout b}The asymptotic relation \eqref{CBargm.eigenvaluesN} was found  in \cite{MelRoz} (in \cite{RaiWar} for $d=1$);  it was discovered there, in particular, that the leading term in the eigenvalue asymptotics  of  Bargmann-Toeplitz operators does not depend on the symbol $V\ge 0$ with compact support (of course, provided it is not  identically zero). In \cite{FilPush}, for $d=1$, the second term of the asymptotics in \eqref{CBargm.eigenvalues} was found, depending on the logarithmic capacity of $\supp V$.
\end{remark}

\subsubsection{Harmonic Bargmann spaces}\label{ss.hBargmann}
The functions $u_{k,j}(x)=Y_{k,j}(\o)|x|^k, $  $j\in[1,\db_k^\R],$ form an orthogonal basis in the Bargmann space $\Bc^{\R}.$ Thus, the eigenvalues of the Toeplitz operator $T_V$ in $\Bc^{\R}$ equal
\begin{equation}\label{RBargm}
    \L_k^\R
    =\frac{\int_0^\infty V(r)r^{2k+d-1}e^{-r^2}dr}{\int_0^\infty r^{2k+d-1}e^{-r^2}dr}=2\frac{\int_0^\infty V(r)r^{2k+d-1}e^{-r^2}dr}{\G(k+\frac{d}{2})}.
\end{equation}
For  $V$ being the characteristic function of the interval  ball $D_b$ we obtain for the eigenvalues of $T_V$, taking into account the multiplicities:
\begin{equation}\label{RBargm.eigenvalues}
    n(\l)=\sum_{\L_k^\R>\l}\db_k^\R\sim2((d-1)!)^{-1}\left(\frac{|\log\l|}{\log|\log\l|}\right)^{d-1},
\end{equation}
or, in the logarithmic scale, inverting \eqref{RBargm.eigenvalues},
\begin{equation}\label{RBargm.eigenvaluesL}
    \log(\l_n^+)=\log(s_n)\sim -(d-1)^{-1}((d-1)!/2)^{\frac{1}{d-1}}(n^{\frac1{d-1}}\log n), \ n\to\infty.
\end{equation}

\subsubsection{Helmholtz Bargmann spaces}\label{ss.HelmBargmann}
The functions
\begin{equation*}
u_{k,j}(x)=Y_{k,j}(\o)|x|^{-\frac{d-2}{2}}J_{k+\frac{d-2}{2}}(|x|); \  k = 0,1,\dots, \ j=1,\dots, \db_k^\R,
\end{equation*}
form an orthogonal basis in the space $\Bc^{\Hb}.$ Thus, the eigenvalues of $T_V$ in $\Bc^{\Hb}$ equal
\begin{equation}\label{HarmBargm}
 \L_k=\L_k^{\Hb}(V)=\frac{\int_0^\infty V(r)J_{k+\frac{d-2}{2}}(r)^2re^{-r^2}dr}{\int_0^\infty J_{k+\frac{d-2}{2}}(r)^2 e^{-r^2}rdr},
\end{equation}
with multiplicity $\db_k^\R.$ The denominator in \eqref{HarmBargm} equals $\frac12\exp(-\frac12)I_{k+\frac{d-1}{2}}(\frac12)$, where $I_\n$ is the modified Bessel function (see \cite{Grad}, 6.663.2).  By \eqref{BesselAs}, this denominator has the asymptotics $\left(\frac{1}2\right)^{k+\frac{d-1}{2}}\exp(-\frac12)\G(k+\frac{d+1}{2})^{-1}.$

For $V$ being the characteristic function of the ball $D_b$, the numerator in \eqref{HarmBargm}  is estimated from above and from below by constants times $b^{2k+d}\left(\G(k+\frac{d}{2})\G(k+\frac{d+1}{2})\right)^{-1}.$
Therefore, in this case,  the eigenvalues $\L_k^{\Hb}$ obey two-sided asymptotic estimates
 \begin{equation}\label{L Helm Barg}
 \L_k^{\Hb}\asymp \left(\frac{1}2\right)^{k+\frac{d}{2}}b^{2k+d}(\G(k+\frac{d+1}{2}))^{-1}.
\end{equation}
 Taking into account multiplicities, the eigenvalues of the Toeplitz operator $T_V$ in the space $\Bc^\Hb$ have the same asymptotics \eqref{RBargm.eigenvalues}, \eqref{RBargm.eigenvaluesL} as for the harmonic Bargmann space.

\subsubsection{The Agmon-H\"ormander space}\label{ssAHspace}
The functions
\begin{equation*}
u_{k,j}(x)=Y_{k,j}(\o)|x|^{-\frac{d-2}{2}}J_{k+\frac{d-2}{2}}(|x|); \  k = 0,1,\dots, \ j=1,\dots, \db_k^\R,
\end{equation*}
form an orthogonal basis in the space $\Bc^{\Ab\Hb}$. The $\Ab\Hb$ norm of these functions equals $\frac1\pi$ (see, e.g., \cite{Strich}, p. 63). Thus, the eigenvalues of the Toeplitz operator $T_V$ in $\Bc^{\Ab\Hb}$ equal
\begin{equation}\label{EigenvAH}
    \L_k^{\Ab\Hb}=\pi \int_0^\infty V(r)J_{k+\frac{d-2}{2}}(|r|)^2 rdr.
\end{equation}
For  $V$ being the characteristic function  of $D_b$, these eigenvalues have the asymptotics
\begin{equation}\label{Eigenv.AH}
    \L_k^{\Ab\Hb}\sim\pi\left(\frac{b^2}{k}\right)^{k+\frac{d-2}{2}}(\G(k+\frac{d}{2}))^{-2}.
\end{equation}
So, the eigenvalues of the Toeplitz operator in the space $\Ab\Hb$ decay considerably faster than in the space $\Bc^\Hb$, with the same symbol.
Counting multiplicities, we obtain for the eigenvalues of $T_V$ the asymptotics
\begin{equation}\label{Eigenv.AH.log}
    \log \l_n=\log(s_n)\sim -\frac1{d-1}((d-1)!/2)^{\frac{2}{d-1}}(n^{\frac2{d-1}}\log n), \ n\to\infty
\end{equation}
and
\begin{equation}\label{Eigenv.AH.N}
    n(\l)\sim 2((d-1)!)^{-1}\left(\frac{|\log\l|}{\log|\log\l|}\right)^{\frac{d-1}2}.
\end{equation}

 \subsection{Sign-definite radial symbols with compact support}\label{compsupp}
 We introduce the following notion.
 \begin{definition}\label{exactSupport} Let the function $V(r), \ r\ge0,$ have compact support. The number $b$ is called the \emph{exact support radius}  (ESR) for $V$ if $V(r)=0$ for $r>b$,  while for any $b'\in(0,b)$,
 \begin{equation*}
    \int_{b'}^b|V(r)| dr >0.
 \end{equation*}
  \end{definition}

 \begin{proposition}\label{PropNonneg} Suppose that $b>0$  is  the ESR for $V\ge0$. Then for the operator $T_V$ in the spaces $\Bc_\Rb^\C, \Bc_\Rb^\R, \Bc_\Rb^\Hb, \Bc^\C, \Bc^\R, \Bc^\Hb, \Bc^{\Ab\Hb}$ hold the asymptotic formulas \eqref{asymp.compl}, respectively, \eqref{asymp.harm}, \eqref{asymp.harm}, \eqref{CBargm.eigenvalues}, \eqref{RBargm.eigenvalues}, \eqref{RBargm.eigenvalues}, and \eqref{Eigenv.AH.log} (as well as the corresponding asymptotic formulas for the counting function.)
 \end{proposition}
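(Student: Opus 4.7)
The plan is to exploit the explicit expressions for the eigenvalues $\L_k$ derived in Sections~\ref{ss.cBergman}--\ref{ssAHspace} together with the monotonicity of $\L_k(V)$ in $V\ge 0$. Because the target asymptotics live in the logarithmic scale (equivalently, in the counting-function formulation via \eqref{counting multi}), it suffices to produce two-sided bounds for $\L_k(V)$ that agree with the model values $\L_k(\chi_{D_b})$ up to factors $e^{o(k)}$. The passage from such bounds to the counting-function asymptotics is then mechanical, by exactly the arithmetic already carried out in \eqref{asymp.compl.count}--\eqref{Eigenv.AH.N}.

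The upper bound is immediate: with $M=\|V\|_\infty<\infty$ one has the pointwise estimate $V\le M\chi_{D_b}$, whence $h_V\le M\,h_{\chi_{D_b}}$ on $\Bc$ and therefore $\L_k(V)\le M\,\L_k(\chi_{D_b})$ for every $k$; the constant $\log M$ is subleading in every case under consideration.

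For the lower bound I invoke the ESR property crucially. For arbitrary $\ve\in(0,b)$ the number $\m(\ve):=\int_{b-\ve}^{b}V(r)\,dr$ is strictly positive by Definition~\ref{exactSupport}. In each of the explicit formulas \eqref{Bergman C}, \eqref{Bergman R}, \eqref{EigenHelm}, \eqref{CBargm}, \eqref{RBargm}, \eqref{HarmBargm}, \eqref{EigenvAH} the numerator has the form $\int_0^{\infty}V(r)\,w_k(r)\,dr$ with a positive weight $w_k$. For the four cases in which $w_k(r)=r^{2k+c}$ or $r^{2k+c}e^{-r^2}$, $w_k$ is monotone increasing on any interval contained in $[0,\sqrt{k+c/2}]$, in particular on $[b-\ve,b]$ once $k$ is large, so
\begin{equation*}
\int_0^{\infty}V(r)\,w_k(r)\,dr\;\ge\;\m(\ve)\,w_k(b-\ve).
\end{equation*}
For the Helmholtz and Agmon--H\"ormander cases the weight is $J_{k+(d-2)/2}(r)^2\,r$; the asymptotic \eqref{BesselAs}, which is uniform on the compact interval $[b-\ve,b]$, yields $J_{k+(d-2)/2}(r)^2\,r\asymp(r/2)^{2k+d-2}\,r\,\G(k+d/2)^{-2}$ uniformly in $r$, so the same inequality holds to leading exponential order. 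Dividing by the $V$-independent denominator gives $\L_k(V)\ge C(\ve)\,\L_k(\chi_{D_{b-\ve}})\cdot(1+o(1))$ as $k\to\infty$. Taking logarithms, summing over $k$ with multiplicity $\db_k^{\C}$ or $\db_k^{\R}$, and then letting $\ve\to 0$ produces the matching lower bound: with the correct $b$-dependent coefficient in the Bergman cases, and trivially so in the Bargmann and Agmon--H\"ormander cases whose leading term is already independent of $b$ (cf.\ Remark~\ref{remWithout b}).

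The only delicate point I anticipate is the uniformity in \eqref{BesselAs} on $[b-\ve,b]$, but this uniformity is a classical fact (and is already invoked tacitly in the derivations \eqref{BesselintAs}, \eqref{HelmCharAs}, \eqref{L Helm Barg} for the model case $V=\chi_{D_b}$); once it is in hand, the remainder of the argument is a routine combination of the model calculations in Sections~\ref{ss.cBergman}--\ref{ssAHspace} with Stirling estimates for the gamma-function ratios.
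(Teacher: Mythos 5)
Your proposal is correct and follows essentially the same route as the paper: the upper bound by domination $V\le M\chi_{D_b}$ with the constant absorbed in the logarithmic scale, and the lower bound by restricting the integral to $[b-\ve,b]$ (the paper's $[b',b]$), bounding the monotone weight from below by its value at the left endpoint, invoking the ESR property to get $\int_{b-\ve}^{b}V>0$, and letting $\ve\to 0$. You are in fact slightly more explicit than the paper about the Bessel-weight cases, where the uniform asymptotics \eqref{BesselAs} on the compact interval is the ingredient the paper leaves tacit.
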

 \begin{proof} In all cases, the asymptotic estimate from above is trivial, since the numerator in the expression for the eigenvalues $\L_k$ increases when $V$ is replaced by the characteristic function of the ball with radius $b$, multiplied by some positive constant, and this constant is not felt in the logarithmic scale.  As for  the lower estimates, the reasoning is similar for all  cases. We present it, as an example, for  operator in the  space $\Bc_\Rb^\C$.

 For the operator $T_V$ in  $\Bc_\Rb^\C$,  fix some $b'<b.$   We have
 \begin{equation}\label{lower.posit.C.Be}
    \int_0^b V(r)r^{2k+2d-1}dr\ge \int_{b'}^b V(r) r^{2k+2d-1}dr\ge
    (b')^{2k+2d-1}\int_{b'}^b V(r)dr.
 \end{equation}
 Passing to the logarithmic scale, we obtain
 \begin{equation*}
    \log(\l_n^+)=\log(s_n(T_V))\gtrsim n^{1/d}(d!)^{\frac1d}\log (b'/\Rb),
 \end{equation*}
 which gives the required lower asymptotic estimate, due to the arbitrariness of $b'$.
 \end{proof}
 \subsection{Rapidly decaying sign-definite symbols}
\begin{definition}\label{RapidDecayDef} A bounded  function $V(r), \ r\in[0,\infty),$ is called rapidly decaying, $V\in\Rc\Dc,$ if
\begin{equation}\label{RD1}
    V(r)=o(\exp(-r^{\vs})), \ r\to\infty \mbox{ for any } \vs>0,
\end{equation}
or, equivalently, $\log |V(r)|<-Cr^\vs, C>0,$ for any $\vs>0.$
\end{definition}
For  further reference, we formulate here an important property of functions in $\Rc\Dc$, which is easily established by a proper change of variables.
\begin{lemma}\label{LemRD}If $V\in\Rc\Dc$, then
\begin{equation}\label{RD2}
 \left|\int_0^\infty V(r)r^s dr\right|=O(\G(\e s)), \ s\to\infty,
\end{equation}
for any $\e>0$.
\end{lemma}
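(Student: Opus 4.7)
The plan is to use the rapid decay bound to reduce the moment integral to a Gamma-function expression and then compare the resulting argument of $\Gamma$ with $\e s$.

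Given $\e>0$, I will fix some $\vs>0$ to be chosen in terms of $\e$ at the end. Since $V\in\Rc\Dc$, the definition yields a constant $C=C_\vs$ and a threshold $R=R_\vs$ such that $|V(r)|\le C e^{-r^\vs}$ for $r\ge R$. Split the integral at $R$:
\begin{equation*}
\left|\int_0^\infty V(r) r^s\,dr\right|\le \|V\|_\infty\frac{R^{s+1}}{s+1}+C\int_R^\infty r^s e^{-r^\vs}\,dr.
\end{equation*}
The first term is dominated by $R^{s+1}$, which is $o(\G(\e s))$ for any $\e>0$ as $s\to\infty$ (by Stirling). So the main contribution is the integral on $[R,\infty)$, which I enlarge to $[0,\infty)$.

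Apply the substitution $t=r^\vs$, so $r=t^{1/\vs}$ and $dr=\vs^{-1}t^{1/\vs-1}\,dt$:
\begin{equation*}
\int_0^\infty r^s e^{-r^\vs}\,dr=\vs^{-1}\int_0^\infty t^{(s+1)/\vs-1}e^{-t}\,dt=\vs^{-1}\,\G\!\left(\tfrac{s+1}{\vs}\right).
\end{equation*}
This gives the bound
\begin{equation*}
\left|\int_0^\infty V(r)r^s\,dr\right|\le C'\,\G\!\left(\tfrac{s+1}{\vs}\right)+o(\G(\e s)).
\end{equation*}

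Now choose $\vs$ large enough that $1/\vs<\e$, say $\vs=2/\e$. Then for $s\ge 1$ one has $(s+1)/\vs=\e(s+1)/2\le \e s$, and since $\G$ is increasing on $[2,\infty)$ this gives $\G((s+1)/\vs)\le \G(\e s)$ for all sufficiently large $s$. Combining with the estimate of the tail term completes the proof of \eqref{RD2}. The only subtle point is that $\vs$ must be chosen after $\e$; this is legitimate because rapid decay grants us the estimate $|V(r)|\le C_\vs e^{-r^\vs}$ for every $\vs>0$.
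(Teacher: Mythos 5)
Your proof is correct, and it is essentially the argument the paper has in mind: the paper omits the proof of Lemma \ref{LemRD}, stating only that it is ``easily established by a proper change of variables,'' which is exactly your substitution $t=r^{\vs}$ reducing the tail integral to $\vs^{-1}\G((s+1)/\vs)$. Your handling of the details — splitting at $R_\vs$, noting $R^{s+1}=o(\G(\e s))$, and choosing $\vs>1/\e$ \emph{after} $\e$ so that $\G((s+1)/\vs)\le\G(\e s)$ for large $s$ — is all sound.
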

\begin{proposition} Let $V\ge0$, $V\in \Rc\Dc$. Then for the eigenvalues of the operator $T_V$ in the spaces $\Bc^\C, \Bc^\R, \Bc^\Hb, \Bc^{\Ab\Hb}$ the  eigenvalue asymptotic formulas \eqref{CBargm.eigenvalues}, respectively, \eqref{RBargm.eigenvalues}, \eqref{RBargm.eigenvalues}, and \eqref{Eigenv.AH.log} hold (as well as the corresponding asymptotic formulas for the counting function.)
\end{proposition}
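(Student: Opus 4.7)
The plan is to mimic Proposition \ref{PropNonneg}: for each of the four spaces I use the explicit eigenvalue formulas \eqref{CBargm}, \eqref{RBargm}, \eqref{HarmBargm}, \eqref{EigenvAH} to show that $\log \L_k$ has the same leading asymptotic behavior as in the case $V=\1_{D_b}$. Once this is established, summing multiplicities $\db_k^\C$ or $\db_k^\R$ transfers the information to the counting function, and then to $\log\l_n^+$, exactly as in Sections \ref{ss.cBargmann}--\ref{ssAHspace}.

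For the lower bound on $\log|\L_k|$, the argument of Proposition \ref{PropNonneg} applies with no change. Since $V\not\equiv 0$, pick $0<a<b$ and $c>0$ such that $E=\{r\in[a,b]:V(r)\ge c\}$ has positive measure. In the complex Bargmann case,
\begin{equation*}
\int_0^\infty V(r)\,r^{2k+2d-1}e^{-r^2}\,dr \ \ge\ c\,|E|\,a^{2k+2d-1}e^{-b^2};
\end{equation*}
dividing by $\G(k+d)$ and applying Stirling produces a lower bound matching the characteristic-function rate \eqref{BargLambda}. The harmonic Bargmann case is identical after an obvious index shift. For $\Bc^\Hb$ and $\Bc^{\Ab\Hb}$ the small-argument Bessel asymptotic \eqref{BesselAs} applied on $[a,b]$ supplies the analogous lower bound on the numerator integral in \eqref{HarmBargm} or \eqref{EigenvAH}.

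For the upper bound the hypothesis $V\in\Rc\Dc$ is used through Lemma \ref{LemRD}. In the two Bargmann cases I drop $e^{-r^2}\le 1$ and invoke the Lemma:
\begin{equation*}
\int_0^\infty V(r)\,r^{2k+2d-1}e^{-r^2}\,dr \ \le\ \int_0^\infty V(r)\,r^{2k+2d-1}\,dr \ =\ O\bigl(\G(\e(2k+2d-1))\bigr)
\end{equation*}
for any $\e>0$. Dividing by $\G(k+d)$ and applying Stirling yields $\log\L_k^\C\lesssim -(1-2\e)k\log k$; letting $\e\to 0$ closes the gap to match the lower bound. For $\Bc^\Hb$ and $\Bc^{\Ab\Hb}$ I use the Poisson integral representation to get the uniform global bound $|J_\n(r)|\le (r/2)^\n/\G(\n+1)$ valid for all $r\ge 0$ and $\n\ge -1/2$; this reduces the numerator integral to a polynomial weight against $V$, Lemma \ref{LemRD} then absorbs it, and dividing by the corresponding denominator (from \eqref{BesselintAs} in the $\Bc^\Hb$ case, or the constant norm in the $\Bc^{\Ab\Hb}$ case) gives the required upper bound.

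The main obstacle will be verifying that the Poisson bound used in the Bessel cases is tight enough in the log scale to recover the correct leading order. It is: for bounded $r$ and large $\n$ the Poisson bound agrees with the true asymptotic \eqref{BesselAs} up to factors polynomial in $\n$, which is negligible against the dominant $\exp(-ck\log k)$ behavior. Thus the upper and lower estimates on $\log\L_k$ have matching leading term in all four cases, and the multiplicity bookkeeping completes the proof.
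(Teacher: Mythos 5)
Your proof is correct and follows essentially the same route as the paper's: a positivity/monotonicity argument for the lower bound on $\log\L_k$, Lemma \ref{LemRD} (together with the Poisson-integral bound $|J_\n(r)|\le (r/2)^\n/\G(\n+1)$ in the Bessel cases) for the upper bound, and then the standard multiplicity bookkeeping. The only difference is that you spell out the lower bound and the $\e\to0$ step explicitly where the paper simply invokes "monotonicity"; no substantive divergence.
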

\begin{proof} Consider the complex Bargmann space first. If $V\in\Rc\Dc$, $V\ge0$, the numbers $\L_k^{\C}$ are given by the same formula \eqref{CBargm}. By monotonicity and Lemma \ref{LemRD},
\begin{equation}\label{RD3C}
   |\log  \left|\int_0^\infty V(r)r^k dr\right||=o(k\log k), \ k\to\infty,
\end{equation}
and therefore, for the numbers $\log \L_k^{\C}$ we have the same asymptotics \eqref{BargLambda} as for a compactly supported symbol, which leads to the  asymptotics
\eqref{CBargm.eigenvaluesN} for the counting function of the operator $T_V$.
The same reasoning takes care of the space $\Bc^\R$.

Consider now the operator in the space $\Bc^\Hb.$ For a general $V\in \Rc\Dc$, for estimating the numerator in \eqref{HarmBargm}, we use the representation formula for Bessel functions,
 \begin{equation}\label{BeselIntRepr}
    J_\n(r)=\left(\frac{r}2\right)^{\n}\left[\G(\n+1/2)\G(1/2)\right]^{-1}\int_{-1}^1 (1-t^2)^{\n-1/2}\cos(rt) dt, \ \re\n>-\frac12,
 \end{equation}
 see, e.g., \cite{Grad},  8.411.8.
It follows from \eqref{BeselIntRepr} that $|J_{k+\frac{d-2}{2}}(r)|\le C r^{k+\frac{d-2}{2}}\G(k+\frac{d-1}{2})^{-1}$; substituting this bound into \eqref{HarmBargm}, we obtain
\begin{equation}\label{HelmEigenv}
    \L_k\le C 2^k (\G(k+\frac{d+1}{2}))^{-1}\left[\int_0^\infty V(r) r^{k+d}dr\right]^2.
\end{equation}
By Lemma \ref{LemRD}, the integral in \eqref{HelmEigenv} is majorated by $\G(\e (k+d))$ for any $\e>0$. So, in logarithmic scale,
\begin{equation}\label{HelmEigenv1}
\log \L_k\sim-\log(\G(k+\frac{d+1}{2}))+o( k\log k),
\end{equation}
 which gives the same  asymptotics for $\log \L_k$, as in the case of $V$ with compact support.
So, for $V\in \Rc\Dc$ the same asymptotics \eqref{CBargm.eigenvalues}, \eqref{CBargm.eigenvaluesN} hold.

The same reasonings takes care of operators in the Agmon-H\"ormander spaces.
\end{proof}

\begin{remark}\label{RDRem} So, the asymptotic formulas for $n(\l)$ are the same for a compactly supported $V$ and for $V\in\Rc\Dc$. On the other hand, it was established in \cite{RaiWar} (see Theorem 2.1 there) that if a reasonably regular $V$ \emph{does not} belong to $\Rc\Dc$, the asymptotics of $n(\l; \Bc^\C)$ is different. This circumstance justifies the introduction of the class $\Rc\Dc$. Further on, in Section \ref{Sect5} we consider oscillating symbols not belonging to $\Rc\Dc.$
\end{remark}


 \subsection{Radial symbols, sign-definite at the periphery}
 As it was found in \cite{PushRoz2} for  Toeplitz operator in $\Bc^\C$ in dimension $d=1$, the asymptotics of eigenvalues is determined only by the sign of  $V$ at the periphery of its support. It turns out that such effect is present in other dimensions and other spaces as well. We explain the corresponding results for the case of a radial symbol, however, with a proper formulation, they hold also in much more general case.

 \begin{proposition}\label{PropPeriph} Suppose that  the ESR for the function $V(r)$ equals $b>0$, $0<b\le\infty$, and for some $b_0<b$,  $V(r)\ge0$ for $r\in(b_0,b).$ Then for such $V$ there are only finitely many negative eigenvalues and the assertion of Proposition \ref{PropNonneg} holds true.
 \end{proposition}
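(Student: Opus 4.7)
The plan is to split $V$ into its peripheral and interior pieces and exploit radial symmetry to reduce to a diagonal analysis. Set $V_1(r) = V(r)\chi_{\{r > b_0\}}$ and $V_2(r) = V(r)\chi_{\{r \le b_0\}}$, so that $V = V_1 + V_2$. By hypothesis, $V_1 \ge 0$, and $V_1$ inherits the ESR $b$ of $V$ (and belongs to $\Rc\Dc$ in the Bargmann/$\Ab\Hb$ cases when $b = \infty$); meanwhile $V_2$ is a bounded symbol supported in $[0,b_0]$ but possibly of variable sign.

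Since $V, V_1, V_2$ are all radial, the Toeplitz operators $T_V, T_{V_1}, T_{V_2}$ are simultaneously diagonal in the common orthogonal basis $\{u_{k,j}\}$ constructed in Sections \ref{ss.cBergman}--\ref{ssAHspace}, and their eigenvalues satisfy
\begin{equation*}
\Lambda_k(V) \;=\; \Lambda_k(V_1) + \Lambda_k(V_2), \qquad k = 0,1,2,\dots,
\end{equation*}
each with the common multiplicity $\db_k$. The engine of the proof is the estimate $\Lambda_k(V_2)/\Lambda_k(V_1) \to 0$ exponentially in $k$. Fix $b' \in (b_0, b)$. The explicit formulas \eqref{Bergman C}, \eqref{Bergman R}, \eqref{EigenHelm}, \eqref{CBargm}, \eqref{RBargm}, \eqref{HarmBargm}, \eqref{EigenvAH} all have the shape of an integral against a radial weight $w_k(r)$ that, for large $k$, is monotonically increasing on $[0,b]$ (a power of $r$ in the Bergman case, a Gaussian-weighted power in the Bargmann case, a squared Bessel term in the Helmholtz and $\Ab\Hb$ cases). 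Restricting the integral defining $\Lambda_k(V_1)$ to $[b', b)$, where $V_1 \ge 0$, gives
\begin{equation*}
\Lambda_k(V_1) \;\gtrsim\; w_k(b')\, \int_{b'}^{b} V_1(r)\, dr,
\end{equation*}
with the integral positive by the ESR hypothesis; and bounding $|V_2| \le \|V_2\|_\infty \chi_{[0,b_0]}$ yields $|\Lambda_k(V_2)| \lesssim \|V_2\|_\infty\, w_k(b_0)$, up to tame normalizing constants that cancel in the ratio. Since $b_0 < b'$, the ratio $w_k(b_0)/w_k(b')$ decays like $(b_0/b')^{2k}$, i.e.\ exponentially in $k$.

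It follows that for all sufficiently large $k$, $\Lambda_k(V) = \Lambda_k(V_1)(1+o(1)) > 0$, so the negative spectrum of $T_V$ arises only from finitely many indices $k$, contributing the finite total dimension $\sum_{\Lambda_k(V) < 0} \db_k < \infty$; this establishes finiteness of the negative eigenvalues. For the positive part, $\log \Lambda_k(V) = \log \Lambda_k(V_1) + o(1)$ as $k \to \infty$, and since both sequences carry the same multiplicities $\db_k$, their re-orderings coincide asymptotically in the logarithmic scale, which by the equivalence noted in Section~\ref{SSreordering1} is equivalent to the agreement of counting functions up to lower-order terms. Applying Proposition~\ref{PropNonneg} (or its $\Rc\Dc$ counterpart when $b = \infty$) to $V_1$ delivers the stated asymptotic formulas for $T_V$.

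The main obstacle lies in justifying the lower bound $\Lambda_k(V_1) \gtrsim w_k(b')\int_{b'}^b V_1$ uniformly across the various spaces, particularly for the Helmholtz and Agmon--H\"ormander cases where the integrand carries an oscillatory squared Bessel factor. This is resolved by restricting to $k$ large enough that $J_{k+(d-2)/2}(r)$ is effectively non-oscillating on $[0,b]$: by \eqref{BesselAs} and the integral representation \eqref{BeselIntRepr}, $J_{k+(d-2)/2}$ behaves like its leading monomial $r^{k+(d-2)/2}/(2^{k+(d-2)/2}\G(k+d/2))$ on any bounded interval for large $k$, so the monotonicity and the localization argument near $r = b'$ proceed exactly as in the polynomial-weight case.
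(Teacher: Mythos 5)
Your argument is correct, but it is genuinely different from the one in the paper. You exploit the radial structure directly: since $V$, $V_1=V\chi_{\{r>b_0\}}$ and $V_2=V\chi_{\{r\le b_0\}}$ are simultaneously diagonalized by the basis $\{u_{k,j}\}$, you get the termwise identity $\L_k(V)=\L_k(V_1)+\L_k(V_2)$ and then kill the inner contribution by the exponential decay of $w_k(b_0)/w_k(b')$. The paper instead makes no use of the diagonalization at this point: it writes an integral representation of a solution $u$ on $D_{b_0}$ through its values on spheres $\partial D_R$ with $R$ in the peripheral region (via the Green function of the Dirichlet problem, averaged against $V(R)R^{d-1}\,dR$), deduces that the quadratic form $\int_{D_{b_0}}V|u|^2$ is \emph{compact} relative to the peripheral form $\int_{D_b\setminus D_{b_0}}V|u|^2$, and concludes by the variational principle that there are at most $\k(1/2)$ negative eigenvalues and $\l_n^+(T_V)\ge\frac12\l_{n+\k(1/2)}^+(T_{V_{b_0}})$. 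Your route is more elementary and yields the sharper termwise statement $\L_k(V)=\L_k(V_1)(1+o(1))$; the paper's route buys generality, since (as the paper remarks) it does not need $V$ to be radial, only that $\supp V_+$ geometrically surrounds $\supp V_-$.

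Two small points you should tighten. First, when $b=\infty$ (Bargmann/AH cases) the weight $r^{2k+\cdot}e^{-r^2}$ is \emph{not} increasing on all of $[0,b)$, so the lower bound for $\L_k(V_1)$ should be obtained by restricting to a fixed compact interval $[b',b'']$ with $\int_{b'}^{b''}V>0$ (which exists by the ESR hypothesis); on such an interval the weight is eventually increasing in $k$ and the argument goes through unchanged, with the ratio against $\max_{[0,b_0]}w_k=w_k(b_0)$ still decaying like $(b_0/b')^{2k}$. Second, for the Helmholtz and Agmon--H\"ormander weights you correctly invoke \eqref{BesselAs}/\eqref{BeselIntRepr}; note that what you actually need is the two-sided bound $J_{k+\frac{d-2}{2}}(r)^2\asymp (r/2)^{2k+d-2}\G(k+\tfrac d2)^{-2}$ uniformly on $[0,b'']$, which follows from the concentration of $(1-t^2)^{\nu-1/2}$ near $t=0$ in \eqref{BeselIntRepr}; with that in hand the comparison reduces to the pure power-weight case as you say. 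Neither point is a gap, only a matter of stating the monotonicity claims on compact intervals rather than on all of $[0,b]$.
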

 \begin{proof}The proof follows  the ideas of Theorem 1.1 in \cite{PushRoz2}. All cases are treated in a similar way, so we consider only the operator in $\Bc^{\Hb}$  as an example.

 The upper asymptotic estimate is, again, trivial. For the lower estimate,
 fix $b'\in(b_0,b)$. Let $G_R(x,y)$ be the Green function for the Dirichlet problem in the ball $D_R:r<R$ for the Helmholtz equation. Such function exists as long as zero is not an eigenvalue of the  Helmholtz operator with Dirichlet boundary conditions  in $D_R$. Such exceptional values of $R$ form a discrete set, therefore we can find an interval $(b_1,b_2)\subset(b',b),$ such that $G_R$ exists for all $R\in(b_1,b_2)$.  By our condition, the interval $(b_1,b_2)$ can be also chosen in such way that $\int_{b_1}^{b_2}V(r)dr>0$.  Note that the function $G_R(x,y)$ is smooth for $x\ne y$.

 Let $u(x)$ be a solution of the Helmholtz equation in $\R^d$. For $x\in D_{b_0}$ and $R\in(b_1,b_2)$, the following integral representation is valid
 \begin{equation}\label{integrRepr}
    u(x)=\int_{\partial D_R}u(y)K(x,y;R)dS_R(y),
 \end{equation}
 where $K(x,y;R)=G_{\n(y)}(x,y)$ is the derivative of $G$ in the direction of the outer normal to $\partial D_R$ at the point $y\in \partial D_R$ and $dS_R(y)$ is the normalized surface measure on $\partial S_R(y)$.
 We multiply \eqref{integrRepr} by $V(R)R^{d-1}$ and integrate in $R\in(b_1,b_2)$. Thus we obtain the integral representation
 \begin{equation}\label{integrRepr2}
     u(x)= \left(\int_{b_1}^{b_2} V(R)R^{d-1} dR\right)^{-1}\int_{|y|\in(b_1,b_2)}K(x,y;R)u(y)V(|y|)dy.
 \end{equation}
 Since $K(x,y;R)$ is a smooth bounded function for $|x|\le b_0$ and $|y|\ge b_2$, the integral operator  \eqref{integrRepr2}  assigning the function $u(x)$, restricted to $D_{b_0} $ and considered as an element in $L^2(D_{b_0})$, to the same function considered as an element of $L^2(D_{b_0})$ with weight $V$, is compact.
 Therefore, the quadratic form
 \begin{equation}\label{QformPlusMinus}
  \ab_{b_0}[u]\equiv  \int_{D_{b_0}}V(|x|)|u(x)|^2dx
 \end{equation}
 is compact with respect to the quadratic form
 \begin{equation}\label{Qformplus}
    \int_{D_{b}\setminus D_{b_0}}V(|x|)|u(x)|^2dx,
 \end{equation}
 all forms, recall, being considered on the space of solutions of the Helmholtz equation.

Now we represent the quadratic forms ratio for the operator  $T_V$ as
\begin{equation}\label{qFormRatio}
    \frac{<T_V u,u>}{<u,u>}=\left[1+\frac{\int_{D_{b_0}}V|u|^2 dx}{\int_{|x|>b_0} V|u|^2 dx}\right]\frac{\int_{D_b}V_{b_0}|u|^2 dx}{<u,u>}, \ u\in\Bc^{\Hb},
\end{equation}
where $V_{b_0}(r)=0,\ r<b_0,$ $V_{b_0}(r)=V(r)$ otherwise.

Due to the compactness, explained above, for any $\e>0$, there exists a subspace $\Lc_\e\subset \Bc^{\Hb}$, having finite dimension $\k(\e)<\infty$, and such that
 \begin{equation}\label{qFormRatio2}
 \left|\frac{\int_{D_{b_0}}V|u|^2 dx}{\int_{|x|>b_0} V|u|^2 dx}\right|<\e\ {\rm{for\ }} u\in \Bc^{\Hb}, {\rm{ orthogonal\ to\ }} \Lc_\e.
\end{equation}
For $\e=1/2$, this means that there are no more than $\k({1/2})$ negative eigenvalues of $T_V$. Further on, by the variational principle, for the positive eigenvalues of $T_V$ the estimate holds
\begin{equation*}
    \l_n^+(T_V)\ge \frac12\l_{n+\k({1/2)}}(T_{V_{b_0}}).
\end{equation*}
Now the required lower estimate follows from Proposition \ref{PropNonneg} applied to the nonnegative symbol $V_{b_0}$.
 \end{proof}
 \section{Auxiliary theorems}\label{Sect3}

\subsection{Re-ordering -- 2}\label{ss.Reordering2}
As it was explained in the  Introduction, the main complication for proving lower estimates for eigenvalues lies in the need of reordering of the sequence of eigenvalues $\L_k$ obtained by the explicit formulas in Sect. \ref{Sect2}. So, supposing that the lower estimate is wrong, and thus a contradicting upper estimate holds, we can obtain a bound for the re-ordered sequence of the numbers $\L_k$, which, however,  does not imply directly any estimate for the numbers $\L_k$ themselves. In order to deal with this circumstance, we need
the following statement  which plays a key role in the sequel.
\begin{proposition}\label{Prop.reord.2} Let $k\mapsto m_k$ be a bijection of the set of nonnegative integers $\Z_+$. For $\b>1$,  we denote by $E_\b$ the set $\{k\in\Z_+:m_k\le\b k\}$, $F_\b=\{m_k: k\in E_\b.\}$. Then
\begin{equation}\label{reorder}
    \#\{F_\b\cap[0,N]\}\ge \frac{\b -1}{\b} N,
\end{equation}
for any natural $N.$
\end{proposition}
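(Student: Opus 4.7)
The plan is to pass to the complement of $F_\b$ inside $[0,N] \cap \Z_+$ and estimate its cardinality by using the bijectivity of $k \mapsto m_k$ to translate a size bound into a range restriction on the index $k$. First I would observe that an integer $m \in [0,N]$ fails to belong to $F_\b$ precisely when $m = m_k$ for some (unique) $k \notin E_\b$, that is, for some $k$ satisfying $m_k > \b k$.

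For any such $k$ contributing to the complement, the two conditions $m_k \le N$ and $\b k < m_k$ force $k < N/\b$. Injectivity of $k \mapsto m_k$ therefore yields
\begin{equation*}
\#\bigl(([0,N] \cap \Z_+) \setminus F_\b\bigr) \;\le\; \#\{k \in \Z_+ : k < N/\b\} \;\le\; \lceil N/\b \rceil \;\le\; \frac{N}{\b} + 1.
\end{equation*}
Subtracting from $\#([0,N] \cap \Z_+) = N+1$ then gives
\begin{equation*}
\#(F_\b \cap [0,N]) \;\ge\; (N+1) - \Bigl(\tfrac{N}{\b} + 1\Bigr) \;=\; \tfrac{\b-1}{\b} N,
\end{equation*}
which is the claimed bound.

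There is no genuine obstacle here; the statement reduces to a pigeonhole-style counting argument, and the main thing to watch is just the off-by-one accounting (namely $\lceil N/\b\rceil \le N/\b + 1$ together with the convention $0 \in \Z_+$, which conveniently cancels the $+1$ coming from the cardinality of $[0,N]\cap\Z_+$). The same reasoning in fact gives the slightly sharper bound $N+1-\lceil N/\b\rceil$, but the weaker form $\tfrac{\b-1}{\b}N$ is what will be needed in the subsequent spectral estimates, where the reordered eigenvalue $\l_n^\pm$ corresponds to some $\L_{m_n}$ with $m_n$ possibly much larger than $n$; the proposition guarantees that a definite positive proportion of the indices up to $N$ do satisfy $m_k \le \b k$, which is exactly what is required to transfer upper bounds between the natural and the monotone enumerations.
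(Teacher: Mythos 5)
Your proof is correct and follows essentially the same route as the paper's: you count the complement of $F_\b$ in $[0,N]\cap\Z_+$, note that each such element is $m_k$ for some $k<N/\b$, and subtract from $N+1$. Your version is in fact slightly more careful with the off-by-one bookkeeping (via $\lceil N/\b\rceil\le N/\b+1$) than the paper's own one-line argument.
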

In other words, the Proposition states that, under a bijection, a controllably nonzero share of integers $m_k$  are not too large, compared with $k.$
\begin{proof}
Suppose that $m_k\in[0,N]\setminus F_\b$. Then $k<m_k/\beta\le N/\beta$. And therefore $\#\{F_\b\cap[0,N]\}> N+1-N/\beta\ge \frac{\b-1}{\b} (N+1) -\frac{1}\b.$
\end{proof}
\begin{remark}\label{RemReorder2} The constant $\frac{\b-1}{\b}$  in Proposition \ref{Prop.reord.2} is sharp. In fact, set    $m_k=[\beta k]+1$ for $k\in\mathbb N\setminus2^{\mathbb N}$, while for  integers   powers of $2$ we define $m_k$  so that to obtain a bijection. Then $\limsup N^{-1}\#\{F_\b\cap[0,N]\}=\frac{\b-1}{\b}.$
\end{remark}
Proposition \ref{Prop.reord.2} leads to the following partial conversion of Proposition \ref{PropReorderTriv}, mentioned in Sect. \ref{SSreordering1}.
\begin{proposition}\label{prop.reord.major} Suppose that  $a_k, b_k$ are real sequences, $b_k>0$ is non-increasing, and for the non-increasing permutation $a^*_k$ of $|a_k|$, we have
\begin{equation}\label{reord.major}
    a^*_k\le b_k.
\end{equation}
Then for any $\b>1$ there exists a subsequence $a_{k_l}$, $k_1<k_2<\dots$, such that
\begin{equation}\label{reord.major.1}
    |a_{k_l}|\le b_{[k_l/\b]}
\end{equation}
and $k_l\le [\frac{\b}{\b-1}l]+1$.
\end{proposition}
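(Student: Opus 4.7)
The plan is to realize the rearrangement as a permutation and apply Proposition \ref{Prop.reord.2} to its inverse. Let $\sigma : \Z_+ \to \Z_+$ be the bijection with $a^*_k = |a_{\sigma(k)}|$, and set $m_k := \sigma^{-1}(k)$, so that $|a_k| = a^*_{m_k}$. The key observation is that if $m_k \ge k/\b$, then, using monotonicity of $a^*$ together with the pointwise bound $a^*_j \le b_j$,
\begin{equation*}
|a_k| = a^*_{m_k} \le a^*_{[k/\b]} \le b_{[k/\b]}.
\end{equation*}
So it suffices to exhibit a subsequence of indices with $m_{k_l} \ge k_l/\b$ whose enumeration satisfies $k_l \le [\frac{\b}{\b-1} l] + 1$.

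The mild obstacle is that Proposition \ref{Prop.reord.2}, applied directly to $k \mapsto m_k$, yields the \emph{opposite} inequality $m_k \le \b k$ on a positive-density set. The remedy is to apply it instead to the inverse bijection $j \mapsto n_j := \sigma(j)$. Its ``$E_\b$''-set becomes $\{j : n_j \le \b j\}$; writing $k = n_j$ and substituting $j = m_k$ transforms the condition $n_j \le \b j$ into $k \le \b m_k$, i.e.\ $m_k \ge k/\b$. Hence
\begin{equation*}
G_\b := \{k \in \Z_+ : m_k \ge k/\b\}
\end{equation*}
coincides with the image set ``$F_\b$'' associated to the inverse bijection, and Proposition \ref{Prop.reord.2} gives $\#(G_\b \cap [0,N]) \ge \frac{\b-1}{\b} N$ for every $N \in \Z_+$.

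Finally, enumerate $G_\b$ in increasing order as $k_1 < k_2 < \cdots$; by the first paragraph above, each $k_l$ satisfies $|a_{k_l}| \le b_{[k_l/\b]}$. To control $k_l$, observe that for any integer $N < k_l$ one has $\#(G_\b \cap [0,N]) \le l - 1$, so the density estimate forces $\frac{\b-1}{\b} N \le l - 1$, i.e.\ $N \le \frac{\b(l-1)}{\b-1}$. Taking $N = k_l - 1$ gives
\begin{equation*}
k_l \le 1 + \frac{\b(l-1)}{\b-1} = \frac{\b l - 1}{\b - 1} < \frac{\b}{\b-1} l + 1,
\end{equation*}
and since $k_l$ is an integer this implies $k_l \le [\frac{\b}{\b-1} l] + 1$, as required. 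The only step demanding any real thought is the dualization in the middle paragraph; the remainder is elementary accounting.
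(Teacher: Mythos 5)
Your proof is correct and follows essentially the same route as the paper: both realize the non-increasing rearrangement as a bijection of $\Z_+$, apply Proposition \ref{Prop.reord.2} to that bijection (your ``dualization'' just recovers the paper's choice of which of the two mutually inverse bijections to feed into it), and then convert the density bound $\#(F_\b\cap[0,N])\ge\frac{\b-1}{\b}N$ into the stated bound on $k_l$. The only cosmetic difference is that you pass from $a^*_{m_k}$ to $b_{[k/\b]}$ via monotonicity of $a^*$, while the paper uses monotonicity of $b$; the two are interchangeable.
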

The statement means that if the  sequence $a_k$, after being non-increasingly reordered, satisfies some sort of monotonous estimate, then in the initial sequence there exists a controllably dense subsequence, for which  a similar but slightly weaker estimate holds.
\begin{proof}Let the non-increasing permutation of the sequence $|a_k|$ be given by the bijection $j\mapsto m_j:$ $|a_{m_j}|=a^*_j,$  so that $|a_{m_j}|\le b_j$.
 Thus, for any $m_j\in F_\b$, we have $m_j\le j\b $, therefore $j\ge m_j/\b$ and $b_j\le b_{[m_j/\b]}$.
 Now we take as the subsequence $k_l$, the elements $m_j\in F_\b$ taken in the increasing order. The inequality \eqref{reord.major.1} is therefore fulfilled.
 By Proposition \ref{Prop.reord.2}, $\#\{F_\b\cap[0,N]\} \ge \frac{\b}{\b-1} N$ for any $N$, which is equivalent to  the second inequality we need.
\end{proof}

We will also need a simple consequence of Proposition \ref{Prop.reord.2} concerning the rate of divergence of the series composed of the inverse values of  $m\in F_\b$.
\begin{proposition}\label{PropInverse}
Under the conditions of Proposition \ref{Prop.reord.2},
\begin{equation}\label{inversesum}
    \limsup_{N\to\infty}{(\log N)^{-1}}{\sum\limits_{m\in F_\b\cap [0,N]}m^{-1}}\ge\frac{\b-1}{\b}.
\end{equation}
\end{proposition}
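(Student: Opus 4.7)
The plan is to deduce the logarithmic lower bound directly from the density bound already established in Proposition \ref{Prop.reord.2} by means of a standard Abel (summation by parts) argument. Write $F(N):=\#\{F_\b\cap[1,N]\}$; Proposition \ref{Prop.reord.2} yields $F(N)\ge \frac{\b-1}{\b}N -1$ for every $N\in\N$ (the harmless additive constant accommodates the distinction between $[0,N]$ and $[1,N]$, and the rounding in the original proof). What we wish to estimate is
\begin{equation*}
S(N):=\sum_{m\in F_\b\cap[1,N]}\frac{1}{m}=\sum_{m=1}^{N}\frac{\1_{F_\b}(m)}{m}.
\end{equation*}

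The natural tool is Abel summation: with $a_m=\1_{F_\b}(m)$, so that $\sum_{m=1}^{N}a_m=F(N)$, one has
\begin{equation*}
S(N)=\frac{F(N)}{N}+\sum_{m=1}^{N-1}F(m)\Bigl(\frac{1}{m}-\frac{1}{m+1}\Bigr)=\frac{F(N)}{N}+\sum_{m=1}^{N-1}\frac{F(m)}{m(m+1)}.
\end{equation*}
Now substitute the density bound $F(m)\ge \frac{\b-1}{\b}m-1$. The leading contribution is
\begin{equation*}
\frac{\b-1}{\b}\sum_{m=1}^{N-1}\frac{1}{m+1}=\frac{\b-1}{\b}\bigl(\log N +O(1)\bigr),
\end{equation*}
while the $O(1)$ error from the ``$-1$'' contributes $\sum_{m\ge 1}\frac{1}{m(m+1)}=1$ at most, and the boundary term $F(N)/N$ is bounded. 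Collecting these,
\begin{equation*}
S(N)\ge \frac{\b-1}{\b}\log N+O(1),
\end{equation*}
from which \eqref{inversesum} follows (in fact with $\liminf$ in place of $\limsup$, a stronger statement).

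There is no genuine obstacle here: the proposition is essentially a weighted restatement of the density estimate from Proposition \ref{Prop.reord.2}, and the only care one has to take is not to lose the constant $\frac{\b-1}{\b}$ in the passage from counting to the logarithmically weighted sum; Abel's identity handles this exactly.
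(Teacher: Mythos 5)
Your proof is correct. The paper states Proposition \ref{PropInverse} without proof, as a ``simple consequence'' of Proposition \ref{Prop.reord.2}, and your Abel--summation argument is exactly the standard way to fill in that omitted step: the identity $S(N)=F(N)/N+\sum_{m=1}^{N-1}F(m)/(m(m+1))$ is right, the substitution of $F(m)\ge\frac{\b-1}{\b}m-1$ loses only bounded terms, and you in fact obtain the stronger $\liminf$ bound; the restriction to $[1,N]$ also quietly repairs the paper's formal inclusion of $m=0$ in the sum.
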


\subsection{Estimates for   functions analytical in a half-plane}
There are a number of results in the classical complex analysis relating the estimates along the real axis of a function analytical in the half-plane $\re \z >0, \z=\x+i\y$, with estimates of its values at some sequence of points. The first of such results we need, with ideas originating in \cite{Levinson}, was obtained in \cite{Boas}, p. 200.
\begin{theorem}\label{ThBoas}
Let $f(\z)$ be a function, analytical in the right half-plane, of exponential type,  satisfying
\begin{equation}\label{BoasCond}
    \int_{-\infty}^\infty \frac{\log_+|f(i\y)|}{1+\y^2}d\y<\infty.
\end{equation}
Suppose that $\m_l$ is a monotone sequence of real points tending to infinity so that $|\m_l-\m_{l-1}|\ge\d>0$ and  $\sum \m_l^{-1}=\infty.$
Then
\begin{equation}\label{BoasResult}
    \limsup_{l\to\infty}\frac{\log |f(\m_l)|}{\m_l}=\limsup_{\x\to+\infty}\frac{\log|f(\x)|}{\x}.
\end{equation}
\end{theorem}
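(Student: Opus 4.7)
The inequality $\displaystyle\limsup_{l\to\infty}\mu_l^{-1}\log|f(\mu_l)|\le\limsup_{\xi\to+\infty}\xi^{-1}\log|f(\xi)|$ is immediate, since $\mu_l\to+\infty$ along real positive values and the right-hand side is a limsup over a larger set. The substantive claim is the reverse inequality.

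I argue by contradiction. Assume there is a strict gap
$\alpha := \limsup_l \mu_l^{-1}\log|f(\mu_l)| < \limsup_{\xi\to+\infty}\xi^{-1}\log|f(\xi)| =: \beta$
and fix $\gamma\in(\alpha,\beta)$. I pass to the auxiliary function $g(\zeta):=f(\zeta)e^{-\gamma\zeta}$. Since $|e^{-\gamma i\eta}|=1$, the Carleman condition \eqref{BoasCond} is inherited by $g$, and $g$ remains of exponential type in the right half-plane. On the other hand,
\begin{equation*}
\limsup_{l\to\infty}\mu_l^{-1}\log|g(\mu_l)|\le\alpha-\gamma<0,\qquad
\limsup_{\xi\to+\infty}\xi^{-1}\log|g(\xi)|\ge\beta-\gamma>0,
\end{equation*}
so $|g(\mu_l)|$ decays exponentially in $\mu_l$, yet $|g|$ is exponentially large along some subsequence $\xi_n\to+\infty$ of the positive real axis.

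The contradiction is then extracted from Carleman's formula for the right half-disk $\{|\zeta|<R,\ \re\zeta>0\}$ applied to $g$ (with a small shift off the origin, if needed, to bypass the integrability issue of the weight $1/\eta^2-1/R^2$ at $\zeta=0$). Under (i) exponential type and (ii) the Carleman condition, the right-hand side of this formula is bounded uniformly in $R$: the semicircular integral is controlled by the type, and the imaginary-axis integral by \eqref{BoasCond} (the $\log_-|g|$ contribution only helps, since it enters with the sign that makes the right-hand side smaller). For the left-hand side, the exponential smallness $|g(\mu_l)|\le e^{(\alpha-\gamma+o(1))\mu_l}$ is converted — via Jensen's mean-value formula in a small disk around each $\mu_l$, the disks being pairwise disjoint by the spacing $|\mu_l-\mu_{l-1}|\ge\delta$, together with an upper bound on $\max_{|\zeta-\mu_l|\le r}|g(\zeta)|$ coming from the exponential-type hypothesis anchored by the large values along $\xi_n$ — into a positive lower bound on the number of zeros of $g$ near each $\mu_l$, and hence into a positive contribution to the Carleman left-hand side, each such contribution being of size at least a constant multiple of $\mu_l^{-1}$. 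Summing, the left-hand side dominates $c\sum_{\mu_l<R}\mu_l^{-1}$, and the divergence hypothesis $\sum_l\mu_l^{-1}=\infty$ forces it to grow without bound as $R\to\infty$, contradicting the uniform boundedness of the right-hand side. This contradiction establishes \eqref{BoasResult}.

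The main obstacle is precisely the step ``$|g(\mu_l)|$ is small $\Rightarrow$ $g$ has many zeros near $\mu_l$, each contributing meaningfully to the Carleman sum''. In our setting $g(\mu_l)$ need not vanish; the quantitative Jensen-type bookkeeping relies crucially on the spacing condition (which keeps the local arguments truly independent) and on the divergence $\sum\mu_l^{-1}=\infty$ (which upgrades a per-$\mu_l$ constant contribution into a genuinely unbounded Carleman sum), so that neither hypothesis in the theorem can be dispensed with.
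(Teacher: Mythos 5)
This theorem is not proved in the paper at all: it is quoted from Boas \cite{Boas}, p.~200 (with the removal of the extra condition $\lim l\m_l^{-1}=0$ due to Leontiev \cite{Leont}), so your attempt has to be judged on its own. The frame of your argument (normalize by $e^{-\g\z}$, apply Carleman's formula to the right half-disk, bound the right-hand side by exponential type plus \eqref{BoasCond}, and force the zero-sum on the left to diverge) is a legitimate template, and your bookkeeping of the right-hand side is correct. But the central step --- ``$|g(\m_l)|$ exponentially small $\Rightarrow$ $g$ has zeros near $\m_l$ contributing $\gtrsim c\,\m_l^{-1}$ to the Carleman sum'' --- has a genuine gap. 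Jensen's formula in a disk about $\m_l$ bounds the number of zeros \emph{from below} by a constant times the difference (circle average of $\log|g|$) minus $\log|g(\m_l)|$; to extract zeros you therefore need a \emph{lower} bound on $\log|g|$ somewhere on each small circle $|\z-\m_l|=r$, commensurate with the smallness at the center. What you invoke instead is an \emph{upper} bound on $\max_{|\z-\m_l|\le r}|g|$ from the exponential type, which controls the wrong side of the inequality and produces nothing. There is no local mechanism forcing $g$ to be large near a given $\m_l$: the hypotheses allow $g$ to be uniformly exponentially small on long stretches of the positive axis containing many of the $\m_l$ without vanishing there at all, with its large values along $\x_n$ located far away. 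The statement is intrinsically global, and no argument that treats each $\m_l$ in isolation and then sums can close it.

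The classical proofs (Levinson \cite{Levinson}, Boas \cite{Boas}) use a different mechanism, which is where both hypotheses actually enter. After the normalization one works with a function bounded in the half-plane (using \eqref{BoasCond} and the type to subtract a harmonic majorant), forms for each $N$ the finite Blaschke-type product $B_N(\z)=\prod_{l\le N}\frac{\m_l-\z}{\m_l+\z}$, corrects $g$ by an interpolating function so that the difference vanishes exactly at $\m_1,\dots,\m_N$ (this is where the separation $|\m_l-\m_{l-1}|\ge\d$ is needed, to control the interpolation coefficients, since $g(\m_l)$ is only small, not zero), divides by $B_N$, and applies the maximum principle. The divergence $\sum\m_l^{-1}=\infty$ is then used once, globally, through the fact that $|B_N(\x)|\to0$ as $N\to\infty$ for every fixed $\x>0$, which propagates the smallness from the nodes to the whole ray. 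If you want a Carleman-formula proof, you must likewise first manufacture a function that genuinely vanishes at the $\m_l$ (e.g.\ the interpolation difference above) before counting zeros; as written, your Jensen step does not go through.
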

\begin{remark}\label{BoasRemark} In \cite{Levinson} and in \cite{Boas} the additional condition
 $\lim l \m_l^{-1}=0$ was imposed. However,  it was shown  in \cite{Leont} that this condition is excessive and can be deleted.
 \end{remark}
 Theorem \ref{ThBoas} will be used for the study of the spectrum of operators in Bergman spaces. For the case of Bargmann spaces another result about estimates of functions, \emph{not} of exponential type,  will be used. We cite its version from \cite{Eiderman}, see also \cite{EidermanEssen}, with an obvious typo corrected.
 \begin{theorem}\label{thmEider}Let the function $f(\z)$ be analytical in the half-plane $\re\z=\x>0$ and satisfy the estimate

 \begin{equation}\label{Cond.Eider1}
    |f(\r e^{i\vf})|=O(\exp[\r(a\log \r \cos\vf+\pi c |\sin \vf|+b\cos \vf)]), \ |\vf|<\pi/2, \r\to\infty,
 \end{equation}
 for some $a\ge 0, c\ge -a/2.$ Suppose also that the  growing sequence $\m_l$ of positive numbers satisfies $\m_{l+1}-\m_l\ge \d>0$ and
 \begin{equation}\label{Cond.Eider2}
    \limsup_{N\to\infty}\left[\sum\limits_{\m_l\le N}\m_l^{-1}-(c+a/2)\log N\right]=\infty.
 \end{equation}
 Then the bound
 \begin{equation}\label{Cond.Eider3}
    \limsup_{l\to\infty}\frac{\log|f(\m_l)|}{\m_l\log\m_l}<-2c
 \end{equation}
 implies that $f(\z)\equiv 0.$
 \end{theorem}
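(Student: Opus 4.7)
The plan is a normalization-plus-Carleman argument: reduce $f$ to a function of finite exponential type by dividing out the super-exponential growth encoded in \eqref{Cond.Eider1}, and then apply a Carleman-type estimate in the right half-plane to derive a contradiction with the density condition \eqref{Cond.Eider2} unless $f\equiv 0$.

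\emph{Step 1 (normalization).} I would set $g(\zeta) = f(\zeta)/\Gamma(\zeta)^{a}$, where $\Gamma(\zeta)^a$ is defined via the principal branch of $\log\Gamma$ in the right half-plane. Stirling's expansion $\log\Gamma(\zeta) = (\zeta-\tfrac12)\log\zeta - \zeta + O(1)$ gives, for $\zeta = \rho e^{i\varphi}$ with $|\varphi|<\pi/2$,
$$\re\log\Gamma(\zeta) = \rho\cos\varphi\,\log\rho - \rho\cos\varphi - \rho\varphi\sin\varphi + O(\log\rho).$$
Subtracting $a$ times this from \eqref{Cond.Eider1} kills the $a\rho\log\rho\cos\varphi$ term and leaves
$$\log|g(\rho e^{i\varphi})| \leq \rho\bigl[\pi c|\sin\varphi| + (b+a)\cos\varphi + a\varphi\sin\varphi\bigr] + O(\log\rho),$$
so $g$ has finite exponential type in the right half-plane; in particular $\log|g(i\eta)| \leq \pi(c+a/2)|\eta| + O(\log|\eta|)$. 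The pointwise hypothesis \eqref{Cond.Eider3} converts, after applying Stirling on the positive real axis, into
$$\limsup_{l\to\infty}\frac{-\log|g(\mu_l)|}{\mu_l\log\mu_l} > a+2c,$$
so $g$ decays super-exponentially along the sequence $\{\mu_l\}$.

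\emph{Step 2 (Carleman bound).} For $g$ of exponential type in the right half-plane, the Carleman formula for the half-disk $\{|\zeta|\leq R,\ \re\zeta\geq 0\}$ relates the boundary values of $\log|g|$ — with the weight $1/\eta^{2}-1/R^{2}$ on the imaginary diameter and the weight $R^{-1}\cos\arg\zeta$ on the semicircle — to sums over its zeros. Replacing the zero terms by the standard Jensen-type contribution from small values yields
$$\sum_{\mu_l\leq R}\left(\frac{1}{\mu_l} - \frac{\mu_l}{R^2}\right)\bigl(-\log|g(\mu_l)|\bigr) \leq \frac{1}{2\pi}\int_{-R}^R\left(\frac{1}{\eta^2}-\frac{1}{R^2}\right)\log^+|g(i\eta)|\,d\eta + J_g(R),$$
where $J_g(R)$ is the semicircular contribution. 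The imaginary-axis bound from Step~1 makes the diameter integral at most $(c+a/2)\log R + O(1)$, while a direct indicator estimate shows $J_g(R) = O(1)$: the putative $R$-linear terms in the indicator of $g$ are killed by the factor $R^{-1}\cos\varphi$ upon integration over $\varphi\in(-\pi/2,\pi/2)$.

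\emph{Step 3 (contradiction).} On the left-hand side, the separation $\mu_{l+1}-\mu_l\geq\delta$ together with the super-exponential lower bound $-\log|g(\mu_l)| \geq (a+2c+\epsilon)\mu_l\log\mu_l$ (for some $\epsilon>0$ and all large $l$) reduces the weighted sum, after an Abel-type summation, to expressions dominated by $(a+2c+\epsilon)\sum_{\mu_l\leq R}\mu_l^{-1}\log\mu_l$, and then by further rearrangement to $\sum_{\mu_l\leq R}\mu_l^{-1}$ with explicit coefficients. The density hypothesis \eqref{Cond.Eider2} says precisely that this sum exceeds $(c+a/2)\log R$ by arbitrarily large amounts along some sequence $R_n\to\infty$, so along this sequence the left-hand side strictly dominates $(c+a/2)\log R_n + O(1)$, contradicting Step~2 unless the strict inequality in \eqref{Cond.Eider3} cannot be realised along any infinite subsequence — i.e.\ $g$, hence $f$, vanishes identically. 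The main obstacle is the precise matching of the $\log R$ coefficients: the threshold $c+a/2$ in \eqref{Cond.Eider2} must coincide exactly with the coefficient produced by the imaginary-axis integral, and one must verify that $J_g(R)$ contributes only $O(1)$ rather than an additional multiple of $\log R$, and that the Abel summation converting $\sum \mu_l^{-1}\log\mu_l$-type weights into $\sum\mu_l^{-1}$ uses the sharp normalization. It is exactly this bookkeeping that forces the use of finer subharmonic and potential-theoretic tools in the proofs of \cite{Eiderman,EidermanEssen}.
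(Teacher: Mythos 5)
The paper does not prove Theorem \ref{thmEider}; it imports it from \cite{Eiderman} (see also \cite{EidermanEssen}), so your attempt has to be judged against those proofs rather than against anything in this text. Your Step 1 is sound and is a genuine reduction: dividing by $\G(\z)^a$ and using Stirling converts the hypotheses into the case $a=0$ with $c$ replaced by $c+a/2$ (note $\pi c|\sin\vf|+a\vf\sin\vf\le\pi(c+a/2)|\sin\vf|$ for $|\vf|\le\pi/2$), turns the smallness exponent $2c$ into $2c+a$, and leaves the density threshold $c+a/2$ unchanged; all constants match, which is a good consistency check.

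The gap is Step 2, and it is fatal rather than a matter of bookkeeping. The displayed inequality is not ``the standard Jensen-type contribution from small values'': Carleman's formula for the half-disc involves the \emph{zeros} of $g$, and there is no version in which a small value $|g(\m_l)|=e^{-m_l}$ contributes a term $(\m_l^{-1}-\m_l R^{-2})\,m_l$ to the left-hand side. Your inequality is in fact false as stated: take $g(\z)=e^{-A\z}$ and any separated sequence $\m_l$ of positive density. Then $-\log|g(\m_l)|=A\m_l$, so your left-hand side grows linearly in $R$, while $\log_+|g(i\y)|=0$ and the semicircle term equals $-A/2$, so your right-hand side is $O(1)$ --- yet $g\not\equiv0$. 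The true mechanism by which a small value at $\m_l$ constrains $g$ passes through the Riesz decomposition $\log|g|=h-G\n$ near $\m_l$: either the harmonic part is very negative there (controlled via Harnack applied to the nonnegative function $M-h$), or the Green potential is large, which, since $G(\m_l,\s)\asymp\log(C/|\s-\m_l|)$, produces only about $m_l/\log(\cdots)$ units of zero-counting mass near $\m_l$. This unavoidable logarithmic loss is exactly why \eqref{Cond.Eider3} demands smallness of order $\exp(-(2c+\e)\m_l\log\m_l)$ rather than merely $\exp(-\e\m_l)$; your Step 2 erases that distinction and would ``prove'' a false strengthening of the theorem. Step 3 inherits the defect (and, as written, under $-\log|g(\m_l)|\ge(a+2c+\e)\m_l\log\m_l$ your left-hand side is of order $n(R)\log R$, not of order $\sum\m_l^{-1}$, so even the intended comparison with $(c+a/2)\log R$ is not the one you describe). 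You concede at the end that the precise bookkeeping requires the potential-theoretic tools of \cite{Eiderman}, \cite{EidermanEssen}; that bookkeeping \emph{is} the proof, and it is missing here.
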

 This theorem improves the classical result by N. Levinson, see \cite{Levinson}, Theorem XLI, in the sense that  it does not require any regularity of the sequence $\m_l.$
 \section{Eigenvalues of Toeplitz operators with non-sign-definite weight}\label{Sect4}
 This Section contains the main results of the paper. These results can be expressed in the following way:
 \begin{equation}\label{EstimGeneral}
    \limsup_{\l\to 0}\frac{n(\l; T_V)}{n(\l;T_{|V|})}=1,
 \end{equation}
 in all spaces under consideration, where  the  radial function $V$ has compact support for $\O=D_\Rb$, and $V\in\Rc\Dc$ for the case of $\O=\C^d$ or $\O=\R^d$. Further on, we consider the concrete cases in detail.
\subsection{Operators in Bergman spaces}
In this section the symbol $V$ is supposed to be an arbitrary real bounded radial function with ESR $b<\Rb.$
 \begin{theorem}\label{ThMainBergmanC} For the singular numbers of the operator $T_V$ in the Bergman spaces  the following asymptotic formulas hold.\\
 For the complex space  $\Bc^\C_{\Rb},$
 \begin{equation}\label{MainBergmanC1}
  \limsup_{\l\to0} n(\l)|\log\l|^{-d}= (d!)^{-1}(2|\log(b/\Rb)|)^{-d};\ n(\l)=n(\l; T_V,\Bc^\C_\Rb).
 \end{equation}
 For the harmonic and Helmholtz spaces $\Bc_\Rb^\R$ and $\Bc_\Rb^\Hb:$
 \begin{gather}\label{MainBergmanR1}
    \limsup_{\l\to0} n(\l)|\log\l|^{-d}=2 ((d-1)!)^{-1}(2|\log(b/\Rb)|)^{-d+1};\\
  {\rm for}\  n(\l)=n(\l; T_V,\Bc^\C_\Rb) \ {\rm{or }} \ n(\l)=n(\l; T_V,\Bc^\Hb_\Rb).\nonumber
 \end{gather}

 \end{theorem}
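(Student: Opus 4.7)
The upper inequality in both identities is essentially immediate. From \eqref{Bergman C}, \eqref{Bergman R} and the positivity of the weight $J_{\n}(r)^{2} r$ in \eqref{EigenHelm} one has $|\L_{k}(V)|\le\L_{k}(|V|)$ in each of the three spaces, hence $n(\l;T_{V})\le n(\l;T_{|V|})$ for every $\l>0$; since the ESR of $|V|$ equals that of $V$, Proposition~\ref{PropNonneg} applied to $|V|$ delivers the ``$\le$'' halves of \eqref{MainBergmanC1} and \eqref{MainBergmanR1}. The rest of the argument is devoted to the matching lower bound, which I plan to carry out in detail for $\Bc^{\C}_{\Rb}$; the harmonic case is identical after replacing $\db_{k}^{\C}$ by $\db_{k}^{\R}$ and \eqref{Bergman C} by \eqref{Bergman R}, while the Helmholtz case reduces to the same complex-analytic estimation via the Bessel asymptotics \eqref{BesselAs}--\eqref{BesselintAs}, which recast \eqref{EigenHelm} as a Mellin-type integral against $r^{2k+d-1}$ up to factors $\G(k+\tfrac{d}{2})\G(k+\tfrac{d+2}{2})$ invisible on the logarithmic scale.

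Assume for contradiction that $\limsup_{\l\to 0} n(\l;T_{V})|\log\l|^{-d}<(d!)^{-1}(2|\log(b/\Rb)|)^{-d}$. Inverting this asymptotic produces $b_{1}\in(0,b)$ and $n_{0}$ such that
\begin{equation*}
s_{n}(T_{V})\le(b_{1}/\Rb)^{2(d!)^{1/d}n^{1/d}},\qquad n\ge n_{0}.
\end{equation*}
Listing each $|\L_{k}^{\C}(V)|$ with multiplicity $\db_{k}^{\C}$ gives the natural sequence $a_{m}$, whose non-increasing rearrangement is $s_{n}(T_{V})$. Proposition~\ref{prop.reord.major}, applied with the majorant just displayed and with $\b>1$ chosen so close to $1$ that $\b^{-1/d}\log(\Rb/b_{1})>\log(\Rb/b_{2})$ for some preassigned $b_{2}\in(b_{1},b)$, furnishes indices $M_{1}<M_{2}<\cdots$ with $M_{l}=O(l)$ and $|a_{M_{l}}|\le b_{\lfloor M_{l}/\b\rfloor}$; by Proposition~\ref{PropInverse}, $\sum_{l}M_{l}^{-1}=\infty$. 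Translating through the cumulative multiplicity $N_{k}=\sum_{j<k}\db_{j}^{\C}\sim k^{d}/d!$ and taking the strictly increasing subsequence $\k_{1}<\k_{2}<\cdots$ of distinct values of the natural index $k(M_{l})$, one obtains
\begin{equation*}
|\L_{\k_{l}}^{\C}(V)|\le(b_{2}/\Rb)^{2\k_{l}},\qquad l\ge l_{0};
\end{equation*}
and since each $\k$ is attained at most $\db_{\k}^{\C}\sim\k^{d-1}/(d-1)!$ times among the $k(M_{l})$, the elementary comparison $\sum_{l}M_{l}^{-1}\le \text{const}\cdot\sum_{l}\k_{l}^{-1}$ yields $\sum_{l}\k_{l}^{-1}=\infty$.

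The decisive step then invokes Theorem~\ref{ThBoas}. I introduce
\begin{equation*}
h(\z)=b^{-2\z}\int_{0}^{\Rb}V(r)\,r^{2\z+2d-1}\,dr,\qquad \re\z>-d,
\end{equation*}
analytic in the right half-plane. Because $V$ is bounded with support in $[0,b]$, the change of variable $r\mapsto r/b$ shows that $|h(\z)|\le\|V\|_{\infty}b^{2d}/(2d)$ throughout $\re\z\ge 0$, so $h$ is of exponential type zero and the integrability condition \eqref{BoasCond} is trivial. By \eqref{Bergman C} the bound on $|\L_{\k_{l}}^{\C}(V)|$ translates into $|h(\k_{l})|\le C_{V}(b_{2}/b)^{2\k_{l}}$, whence $\limsup_{l}\log|h(\k_{l})|/\k_{l}\le 2\log(b_{2}/b)<0$. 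Theorem~\ref{ThBoas}, applicable thanks to $\sum\k_{l}^{-1}=\infty$ and the unit-gap separation of integers, now delivers
\begin{equation*}
\limsup_{\x\to+\infty}\frac{\log|h(\x)|}{\x}\le 2\log(b_{2}/b)<0.
\end{equation*}

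To close the argument I substitute $r=be^{-u}$, which rewrites $h(\x)$ as $b^{2d}$ times the one-sided Laplace transform at $2\x+2d$ of $W(u)=V(be^{-u})$ on $[0,\infty)$. The ESR condition on $b$ says $\int_{0}^{u_{0}}|W(u)|\,du>0$ for every $u_{0}>0$, so the essential infimum of $\supp W$ equals $0$; a standard Paley--Wiener estimate for the Laplace transform then identifies its exponential decay rate as $\sigma\to+\infty$ with $\mathrm{ess\,inf}\,\supp W$, forcing $\limsup_{\x\to+\infty}\log|h(\x)|/\x=0$ and contradicting the strict negativity obtained from Boas. The main obstacle in the whole scheme is the reordering step: transferring a hypothetical upper bound on the monotonically rearranged singular values back to a pointwise estimate on the natural eigenvalue sequence at an index set dense enough (in the Boas sense $\sum\k_{l}^{-1}=\infty$) to trigger the half-plane analyticity argument — this is exactly the function of Propositions~\ref{prop.reord.major} and~\ref{PropInverse}.
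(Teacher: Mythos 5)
Your treatment of the complex and harmonic Bergman spaces is correct and follows essentially the same route as the paper: monotonicity plus Proposition \ref{PropNonneg} for the upper bound, and, for the lower bound, the contradiction argument converting the hypothetical counting-function bound into a bound on the rearranged sequence, Propositions \ref{prop.reord.major} and \ref{PropInverse} to extract a subsequence $\L_{\k_l}$ with $\sum\k_l^{-1}=\infty$, Theorem \ref{ThBoas} applied to the bounded analytic continuation of $k\mapsto\L_k$ to the right half-plane, and finally a uniqueness theorem for the one-sided Laplace transform (your ``Paley--Wiener estimate'' is the same statement as the moments theorem, Theorem 6.9.5 of \cite{Boas}, that the paper invokes). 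Your bookkeeping of multiplicities through the cumulative index $N_k\sim k^d/d!$ is a legitimate variant of the paper's direct application of Proposition \ref{prop.reord.major} to the sequence $\L_k$ itself, and your check that $\sum\k_l^{-1}=\infty$ survives the passage from the indices $M_l$ to the distinct natural indices is sound.

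The gap is in the Helmholtz case $\Bc_\Rb^\Hb$, which you dismiss by claiming that \eqref{BesselAs}--\eqref{BesselintAs} recast \eqref{EigenHelm} as a Mellin-type integral against $r^{2k+d-1}$ up to factors invisible on the logarithmic scale. This reduction fails for a sign-changing $V$. The asymptotics \eqref{BesselAs} controls the denominator of \eqref{EigenHelm}, but in the numerator $J_{k+\frac{d-2}{2}}(r)^2$ is only asymptotically, not identically, proportional to $r^{2k+d-2}$; the relative error is of order $r^2/k$, and when integrated against an oscillating $V$ this correction can be vastly larger than the exponentially small quantity $\bigl(\th b\bigr)^{2k}\G(k+\tfrac d2)^{-1}\G(k+\tfrac{d+2}2)^{-1}$ that must be bounded. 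In other words, smallness of $\int_0^\Rb V(r)J_{k+\frac{d-2}{2}}(r)^2 r\,dr$ does not transfer to smallness of $\int_0^\Rb V(r)r^{2k+d-1}dr$ by asymptotic replacement. The paper circumvents this with the exact Neumann identity \eqref{Besselseries}, which expresses $r^{2m}$ as a convergent series in $J_j(r)^2$, $j\ge m$, and then sums the bounds on $\int V J_j^2 r\,dr$ obtained from Theorem \ref{ThBoas} at integer (for even $d$) or half-integer (for odd $d$) arguments; this step, together with the parity split in $d$, is essential and is missing from your argument.
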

 \begin{proof} The proofs for the analytical and harmonic cases are almost identical; we present the first one.

 The upper estimate in \eqref{MainBergmanC1} follows from Proposition \ref{PropNonneg} by monotonicity and Proposition \ref{PropReorderTriv}. We will prove the lower estimate. Suppose that it is wrong; this means that
 \begin{equation}\label{MainBergmanC22}
    n(\l; T_V,\Bc^\C_\Rb)<\g (d!)^{-1}(2|\log(b/\Rb)|)^{-d}|\log\l|^{d}
 \end{equation}
for some  $\g<1$ and
 for $\l $ small enough.

 The singular numbers of $T_V$ are  equal to the numbers $|\L_k|$ defined in \eqref{Bergman C}, permuted in the non-increasing order (we denote by $\s_m$ this permuted sequence), with multiplicities $\db_k^\C$ given in Sect.\ref{Sect2}.  So,  $\s_{m_k} =|\L_{k}|$, where $k\mapsto m_k$ is some bijection of $\Z_+$.

By \eqref{asymp.compl.count},
\begin{equation}\label{MainBergmanC3}
    n(\l)=\sum_{|\L_k|>\l}\db_k^{\C}=\sum_{\s_{m_k}>\l}\db_k^{\C}.
\end{equation}
Since the numbers $\db_k^{\C}$  increase with $k$ growing,  the quantity in \eqref{MainBergmanC3} can only decrease if we replace in \eqref{MainBergmanC3} the values of $k$ by their smallest possible values, i.e.,
 \begin{equation}\label{MainBergmanC4}
     n(\l)\ge \sum_{k=0}^{n_0(\l)}\db_k^{\C},
 \end{equation}
 where ${n_0(\l)}=\#\{j:\s_j>\l\}.$

 So, since $\db_k^{\C}=(1+O(k^{-1}))k^{d-1}((d-1)!)^{-1}$ for large $k$, we have
 \begin{equation}\label{MainBergmanC5}
    n(\l)\ge \sum_{k=0}^{n_0(\l)} \frac{k^{d-1}(1+O(k^{-1}))}{(d-1)!}\ge(1+O(n_0(\l)^{-1}))\frac{n_0(\l)^d}{d!}.
 \end{equation}

 Substituting \eqref{MainBergmanC5} into \eqref{MainBergmanC22}, we obtain
 \begin{equation}\label{MainBergmanC6}
    n_0(\l)\le (1+O(|\log \l|^{-1}))\g^{\frac1d}|\log \l|, \ \mbox{or} \s_m\lesssim \left(\g'\frac{b}{\Rb}\right)^{2m},
 \end{equation}
 for $\l$ small enough, resp.,  $m$  large enough and some $\g'<1$. Our next aim is to derive an estimate for $\L_k$ from \eqref{MainBergmanC6}.

 We fix some $\b>1$, to be determined  later,  and apply Proposition \ref{prop.reord.major}  to the sequences $a_k=|\L_k|$, $b_k=\left(\g'\frac{\b}{\Rb}\right)^{2m}.$ Thus there exists a subsequence $\L_{k_l}$ such that
 \begin{equation}\label{MainBergmanC7}
    |\L_{k_l}|\le  C\left(\g'\frac{b}{\Rb}\right)^{{2}k_l\b^{-1}}.
 \end{equation}

We are going to show now  that the inequality   \eqref{MainBergmanC7} holds not only for the subsequence $\L_{k_l}$ but for all $\L_k$, probably, with  slightly worse constants. To do this, we introduce the complex variable $\z=\x+i\y$ and consider the function

 \begin{equation}\label{MainBergmanC8}
    f(\z)=(2\z+2d)R^{-(2\z+2d)}\int_{0}^{\Rb}V(r)r^{2\z+2d-1}dr.
 \end{equation}

 The function $f(\z)$ is analytical and bounded in the half-plane $\re\z=\x>0$, so the condition \eqref{BoasCond} is satisfied.  The values of $f$ at integer points  $k$ coincide with the numbers $\L_k$, due to \eqref{Bergman C}.
 By the second inequality in \eqref{MainBergmanC6}, the series $\sum (k_l)^{-1}$ diverges. So, all conditions or Theorem \ref{ThBoas} are fulfilled and, therefore,
 \begin{equation*}
    \limsup_{k\to\infty}\frac{\log |\L_{k}|}{k}= \limsup_{l\to\infty}\frac{\log|\L_{k_l}|}{k_l},
 \end{equation*}
 or, returning back from the logarithmic scale,
 \begin{equation}\label{MainBergmanC9}
    \L_k\le C\left(\g'\frac{b}{\Rb}\right)^{{2}k/\b'}
 \end{equation}
 for any $\b'>\b$. Since $\g'<1$, we can choose   the parameter $\b$ and then $\b'>1$ in the above reasoning so close to $1$ that
 \begin{equation}\label{MainBergmanC10}
    \left(\g'\frac{b}{\Rb}\right)^{{2/\b'}}< \left(\th\frac{b}{\Rb}\right)^{{2}}
 \end{equation}
  for some $\th<1.$ We substitute \eqref{Bergman C} into \eqref{MainBergmanC9} and obtain
 \begin{equation}\label{MainBergmanC11}
    \int\limits_0^\Rb V(r)r^{2k+2d-1}dr =O((\th b)^{2k}).
 \end{equation}
 It remains to apply a classical theorem about the properties of the moments problem, say, Theorem 6.9.5 in \cite{Boas}, saying that \eqref{MainBergmanC11} implies
 $\supp V\subset[0,\th b]$, and this inclusion contradicts our condition that $b$ is the ESR for $V$.

 Now we pass to the proof for the Helmholtz case. Again, suppose that
 \eqref{MainBergmanR1} is wrong; this means that
 \begin{equation}\label{MainBergmanH2}
      n(\l; T_V,\Bc^\Hb_\Rb)|\log\l|^{-d+1}<2\g ((d-1)!)^{-1}(2|\log(b/\Rb)|)^{-d+1}
 \end{equation}
 with some $\g<1$, for small $\l.$ In the same way as for the case of the complex spaces, \eqref{MainBergmanH2} implies the estimate for the numbers $\s_m$, the monotonically re-ordered sequence of the numbers $\L_k=\L_k^\Hb:$

  \begin{equation}\label{MainBergmanH3}
  \s_m\le \left(\g\frac{b}{\Rb}\right)^{2m}.
  \end{equation}
   We can further proceed  as before, to derive from \eqref{MainBergmanH3} the estimate for a sufficiently dense subsequence in $\L_m:$
   \begin{equation}\label{MainBergmanH4}
     |\L_{k_l}|\le  C\left(\g\frac{b}{\Rb}\right)^{{2}k_l/\b'},
   \end{equation}
   with $k_l\le \frac{\b}{\b-1}l.$

Next, as before, we need to carry over the estimate \eqref{MainBergmanH4} from the subsequence to the whole sequence $\L_k$, and even to the fractional $k$. To achieve this, we consider the auxiliary function $f(\z)$ analytical in the half-plane $\x=\re\z>0:$
\begin{equation}\label{MainBergmanH5}
    f(\z)=\G(\z+d/2)\G(\z+(d+2)/2)){\int_0^\Rb V(r)J^2_{\z+\frac{d-2}{2}}(r)rdr}.
\end{equation}
By the known asymptotics of Bessel functions for the large index (see \eqref{BesselAs}),  the function $f(\z) $ is bounded in the half-plane $\re\z>0$,  moreover,   by \eqref{EigenHelm}, \eqref{BesselIntegr}, and \eqref{BesselintAs}, its values  the integer points $\z=k$ are asymptotically equal to the numbers $\L_k$.

 Therefore, we can apply Theorem \ref{ThBoas} to the  function $f(\z)$, similarly to the reasoning for the complex case above,  thus obtaining  for real $\x>0$ and some $\g'\in{(\g,1)}$
 \begin{equation}\label{MainBergmanH4a}
    |f(\x)|\le  C\left(\g'\frac{b}{\Rb}\right)^{{2}\x/\b}, \ \x\to\infty,
   \end{equation}
   which means
   \begin{equation}\label{MainBergmanH5aa}
    \int_0^\Rb V(r)J^2_{\x+\frac{d-2}{2}}(r)rdr\le  C[\G(\x+d/2)\G(\x+(d+2)/2)]^{-1}\left(\g'\frac{b}{\Rb}\right)^{{2\x/\b}}, \ \x\to\infty.
   \end{equation}
   Now we need to consider the cases of even and odd $d$ separately.
   
   For the case of an even dimension $d$, we write \eqref{MainBergmanH5aa} for integer values $\x=k:$
   \begin{equation}\label{MainBergmanH5a}
    \int_0^\Rb V(r)J_{k+\frac{d-2}{2}}(r)^2rdr\le  C[\G(\k+d/2)\G(k+(d+2)/2)]^{-1}\left(\g'\frac{b}{\Rb}\right)^{{2k/\b}}.
   \end{equation}
   We use now  C.Neumann's  formula, see \cite{Watson},  2.72(2),  or \cite{Grad}, 8.536.2:
   \begin{equation}\label{Besselseries}
  \sum_{j=m}^{\infty}\frac{\G(m+j)}{j\G(j-m+1)}J_j^2(r)=\frac{(2m)!}{(m!)^2}\left(\frac{r}2\right)^{2m}, \ m\in\Z_+.
   \end{equation}
  It is easy to see from the Stirling formula that the series in \eqref{Besselseries} converges uniformly on finite intervals, and, asymptotically in $m$, the leading term prevails.  We substitute the expression for $r^{2m}$ with $m=k+\frac{d-2}{2}$  from \eqref{Besselseries}   into $\int_0^\Rb r^{2(k+\frac{d-2}{2})}V(r)dr:$
  \begin{equation}\label{MainBergmanH6}
    \int_0^\Rb r^{2m}V(r)rdr=2^{2m}\frac{(m!)^2}{(2m)!} \sum_{j=m}^{\infty}\frac{j\G(m+j)}{\G(j-m+1)}\int_0^\Rb V(r)J^2_j(r)r dr.
  \end{equation}
  For each term in \eqref{MainBergmanH6}, we apply the estimate \eqref{MainBergmanH5a}. Calculations with $\G$-functions show that
  \begin{equation}\label{MainBergmanH7}
    \int_0^\Rb r^{2k}V(r)r^{d-1}dr=O(\left(\g'{b}\right)^{{2k/\b}}).
  \end{equation}
  Finally, since $\g'<1$, we can choose $\b>1$ so close to $1$ that $ \left(\g'{b}\right)^{1/{\b}}<\th b, \ \th<1$. Now we can again apply Theorem 6.9.5 in \cite{Boas}, which gives $\supp V\subset[0,\th b]$, $\th<1$ which contradicts our assumption that $b$ is the ESR for $V$.

  A similar reasoning works for case of an odd dimension $d.$ We however consider   \eqref{MainBergmanH5aa}  for half-integer $\x$, $\x=k+1/2$.  Then, since $(d-2)/2$ is half-integer,  \eqref{MainBergmanH5aa} gives an estimate of integrals containing Bessel functions with integer index.
 The final step in the proof is the same.
\end{proof}
\subsection{Operators in Bargmann and AH-spaces}
For the case of Bargmann and AH spaces, the consideration follows the same idea as for the Bergman spaces, with minor modifications.
\begin{theorem}  \label{ThEstBargC}  Let the radial symbol $V$ belong to $\Rc\Dc.$ Then
(i) For $\Bc=\Bc^\C(\C^d)$,
\begin{equation}\label{SnizuCompl}\limsup_{\l\to 0}\left[n(\l) \left(\frac{|\log\l|}{\log|\log\l|}\right)^{-d}\right]=  (d!)^{-1} .
\end{equation}
(ii) For $\Bc=\Bc^\R(\R^d)$, or  $\Bc^\Hb(\R^d)$
\begin{equation}\label{SnizuReal}
    \limsup_{\l\to 0}\left[n(\l) \left(\frac{|\log\l|}{\log|\log\l|}\right)^{-d+1}\right]= ((d-1)!)^{-1}.
\end{equation}
(iii) For $Bc=\Bc^{\Ab\Hb}(\R^d)$
\begin{equation}\label{SnizuAH}
\limsup_{\l\to 0}\left[n(\l) \left(\frac{|\log\l|}{\log|\log\l|}\right)^{-d+1}\right]=2 ((d-1)!)^{-1}.
\end{equation}
\end{theorem}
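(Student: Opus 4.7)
The upper bound in each of \eqref{SnizuCompl}, \eqref{SnizuReal}, \eqref{SnizuAH} is immediate from monotonicity. Since $|\L_k(V)|\le\L_k(|V|)$ at the level of the explicit formulas \eqref{CBargm}, \eqref{RBargm}, \eqref{HarmBargm}, \eqref{EigenvAH}, and since $|V|\in\Rc\Dc$ is nonnegative, the reordering bound in Proposition \ref{PropReorderTriv} together with the sign-definite asymptotics of Section \ref{Sect2} supplies the $\le$ half. The heart of the theorem is the matching lower bound, which I will prove by contradiction in a manner parallel to Theorem \ref{ThMainBergmanC}, but with Eiderman's Theorem \ref{thmEider} in place of Boas's Theorem \ref{ThBoas}, to accommodate the super-exponential rate of eigenvalue decay characteristic of Bargmann-type settings.

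Suppose the lower bound fails in case (i); then $n(\l;T_V)\le\g(d!)^{-1}(|\log\l|/\log|\log\l|)^d$ for some $\g<1$ and all sufficiently small $\l$ (cases (ii) and (iii) are analogous with $d$ replaced by $d-1$ and with the corresponding overall constant). Using the monotonicity in $k$ of the multiplicities $\db_k^\C$ as in \eqref{MainBergmanC3}--\eqref{MainBergmanC6}, I get $n_0(\l)\le\g^{1/d}|\log\l|/\log|\log\l|$, where $n_0(\l)$ counts the indices $k$ with $|\L_k|>\l$. Inversion yields, for the monotone rearrangement $\s_m$ of the sequence $\{|\L_k|\}$, the bound $\log\s_m\lesssim-\g^{-1/d}m\log m$. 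Proposition \ref{prop.reord.major}, applied with a parameter $\b>1$ to be calibrated, then produces a subsequence $\{k_l\}\subset F_\b$ of density at least $(\b-1)/\b$ such that
\begin{equation*}
\log|\L_{k_l}|\lesssim -\g^{-1/d}\b^{-1}k_l\log k_l.
\end{equation*}

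Next, I construct an analytic interpolant in the right half-plane whose integer values reproduce $\L_k$. For $\Bc^\C$, take $f(\z)=2\,\G(\z+d)^{-1}\int_0^\infty V(r)r^{2\z+2d-1}e^{-r^2}dr$, so that $f(k)=\L_k^\C$; for $\Bc^\R$ use the analogue with $r^{2\z+d-1}$; for $\Bc^{\Hb}$ take $f(\z)=\G(\z+d/2)\G(\z+(d+2)/2)\int_0^\infty V(r)J^2_{\z+(d-2)/2}(r)\,re^{-r^2}dr$ and use \eqref{BesselAs} to asymptotically control the $\G$-quotient; and for $\Bc^{\Ab\Hb}$ use the analogue built on \eqref{EigenvAH}. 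In each case, the trivial estimate $|V|\le\|V\|_\infty$ together with Stirling's formula gives
\begin{equation*}
\log|f(\rho e^{i\vf})|\le \rho\vf\sin\vf+\rho\cos\vf\log\cos\vf+O(\log\rho)\le \frac{\pi}{2}\rho|\sin\vf|+O(\log\rho)
\end{equation*}
throughout $|\vf|<\pi/2$, matching \eqref{Cond.Eider1} with $a=0$ and $c=1/2$. Proposition \ref{PropInverse} applied to $\{k_l\}\subset F_\b$ supplies the sum condition \eqref{Cond.Eider2}, and Theorem \ref{thmEider} then forces $f\equiv 0$.

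In the Bargmann cases (i) and (ii), $f\equiv 0$ says that all the weighted moments $\int V(r)r^{2k+d'-1}e^{-r^2}dr$ (with $d'=2d$ or $d$) vanish, which after the substitution $t=r^2$ and the uniqueness part of the Stieltjes moment problem forces $V\equiv 0$, contradicting nontriviality of $V\in\Rc\Dc$. For the Helmholtz and AH cases the vanishing is of Bessel-function integrals and must be propagated to polynomial moments through C.\ Neumann's identity \eqref{Besselseries}, exactly as in \eqref{MainBergmanH5a}--\eqref{MainBergmanH7}, with the familiar splitting into even and odd dimensions $d$. The main technical hurdle is the joint choice of $\b$: it must be large enough that the density $(\b-1)/\b$ of $\{k_l\}$ exceeds the critical value $c+a/2=1/2$ demanded by \eqref{Cond.Eider2}, yet small enough that $\g^{-1/d}\b^{-1}>1$ in order to verify the Eiderman condition \eqref{Cond.Eider3}; arranging this balance uniformly in $\g<1$ is the delicate combinatorial point and is precisely where the full quantitative strength of Proposition \ref{prop.reord.major}, rather than just its qualitative precursor Proposition \ref{Prop.reord.2}, is essential.
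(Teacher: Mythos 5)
Your overall architecture (contradiction, multiplicity counting, Proposition \ref{prop.reord.major}, analytic interpolation, Theorem \ref{thmEider}, then a uniqueness-of-moments argument) is the same as the paper's, but there is a fatal quantitative gap at the step where you verify Eiderman's hypotheses, and you have half-acknowledged it yourself without resolving it. By normalizing the interpolant as $f(\z)=2\,\G(\z+d)^{-1}\int_0^\infty V(r)r^{2\z+2d-1}e^{-r^2}dr$ and estimating it using only $|V|\le\|V\|_\infty$, you land in \eqref{Cond.Eider1} with $c=1/2$ (and $a=0$). Then \eqref{Cond.Eider2} forces the subsequence density to satisfy $\frac{\b-1}{\b}>c+a/2=\frac12$, i.e.\ $\b>2$; but \eqref{Cond.Eider3} with $2c=1$ requires $\limsup\log|f(k_l)|/(k_l\log k_l)<-1$, and your subsequence only gives the rate $-\g^{-1/d}\b^{-1}$, so you also need $\b<\g^{-1/d}$, which is arbitrarily close to $1$ as $\g\uparrow 1$. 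The two requirements $\b>2$ and $\b<\g^{-1/d}$ are incompatible, so the "delicate balance" you defer to at the end cannot be arranged. This is not a repairable technicality within your setup: your argument never uses the hypothesis $V\in\Rc\Dc$, only boundedness, and Section \ref{Sect5} of the paper (Theorem \ref{ThmNcounter}) exhibits a bounded radial $V\notin\Rc\Dc$ for which the conclusion \eqref{SnizuCompl} is false. Any proof that works for merely bounded $V$ must therefore be wrong.

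The paper's way out is precisely where $\Rc\Dc$ enters: it works with the \emph{un-normalized} moment function $f(\z)=2\int_0^\infty V(r)r^{2\z+2d-1}e^{-r^2}dr$ (so $f(k)=\L_k\G(k+d)$), and Lemma \ref{LemRD} then gives $|f(\z)|=O(\G(\e\r\cos\vf))$ for every $\e>0$. Hence \eqref{Cond.Eider1} holds with \emph{arbitrarily small} $a,c>0$, the density threshold $a+c/2$ can be pushed below $\frac{\b-1}{\b}$ for $\b$ as close to $1$ as the decay condition demands, and \eqref{Cond.Eider3} becomes $\limsup\log|f(k_l)|/(k_l\log k_l)\lesssim(1-\g'/\b)<-2c$ for small enough $c$. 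If you want to keep your normalization you would have to import the $\Rc\Dc$ decay into the half-plane bound for your $f$, but then the exponent of $\cos\vf\log\r$ becomes negative, violating $a\ge0$ in Theorem \ref{thmEider}; the cleaner fix is simply to adopt the paper's un-normalized interpolant. (Your endgame via the Stieltjes moment problem in cases (i)--(ii) is an acceptable substitute for the paper's Mellin inversion, but it is moot until the main step is repaired.)
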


\begin{proof} All cases are proved in a similar manner.
We give the proof of the part (i) of Theorem \ref{ThEstBargC} in detail and then explain the changes needed for other cases.  

The proof starts in the same way, as for Theorem \ref{ThMainBergmanC}.  The estimate from above in \eqref{SnizuCompl} is already established. Suppose that the lower bound in  \eqref{SnizuCompl} is wrong. This means that for some $\g<1$, the inequality
\begin{equation}\label{SnizuC.1}
n(\l)<\g (d!)^{-1}\left(\frac{|\log\l|}{\log|\log\l|}\right)^{d}
\end{equation}
holds for all sufficiently small $\l>0$.
The singular numbers of the operator $T_V$ are  equal to  the numbers $|\L_k|$, see \eqref{CBargm}, permuted in the non-increasing order, with multiplicities $\db^\C_k $ (defined in Sect. 2.3.1). So, these singular numbers equal $\s_{m_k}=|\L_k|$, where $k\mapsto m_k$ is some bijection of $\Z_+.$
By \eqref{counting multi},
\begin{equation}\label{SnizuC.2}
    n(\l)=\sum_{|\L_k|>\l}\db^\C_k=\sum_{\s_{m_k}>\l}\db^\C_k.
\end{equation}
Since the multiplicities $\db^\C_k$ are non-decreasing as $k$ grows, the quantity \eqref{SnizuC.2} can only decrease if we replace in the sum in \eqref{SnizuC.2} the values of $k$ by their smallest possible values,
\begin{equation}\label{SnizuC.3}
    n(\l)\ge \sum_{k=0}^{n_0(\l)}\db^\C_k,
\end{equation}
where $n_0(\l)=\#\{\s_m>\l\}.$
So, since $\db^\C_k= k^{d-1}((d-1)!)^{-1}(1+O(k^{-1}))$, we have
\begin{equation}\label{SnizuC.4}
    n(\l)\ge\sum_{k=0}^{n_0(\l)}\frac{k^d}{(d-1)!}(1+O(k^{-1}))=(1+O(n_0(\l)^{-1}))\frac{n_0(\l)^d}{d!}.
\end{equation}
We substitute \eqref{SnizuC.4} into \eqref{SnizuC.1} and obtain
\begin{equation}\label{SnizuC.5}
  \g'n_0(\l)\le (1+o(1)) \frac{|\log\l|}{\log|\log\l|}, \  \g'\in (1, \g^{-\frac1d}).
\end{equation}

We rewrite   \eqref{SnizuC.5} in   terms of an estimate for $\s_m:$
\begin{equation}\label{SnizuC.6}
\s_m\lesssim \G(\g' m)^{-1}.
\end{equation}
We   derive now an estimate for $\log|\L_k|$ from \eqref{SnizuC.6}.
Fix some $\b>1$, to be  determined later, and apply Proposition \ref{prop.reord.major} to the sequences $a_k=|\L_k|, b_k=\G(\g' m_k)^{-1}.$ Thus there exists a subsequence $\L_{k_l}$ such that

\begin{equation}\label{SnizuC.7a}
    |\L_{k_l}|\lesssim \G(\g' k_l/\b)^{-1},
\end{equation}
or, in the logarithmic scale,
\begin{equation}\label{SnizuC.7}
    \log|\L_{k_l}|\le -\g'\b^{-1} k_l \log ( k_l/\b)(1+o(1)).
\end{equation}

Now we introduce the complex variable $\z=\x+i\y$ and consider  in the half-plane $\x>0$ the function
\begin{equation}\label{SnizuC.8}
    f(\z)=2{\int_0^\infty V(r)r^{2\z+2d-1} e^{-r^2}dr}.
\end{equation}
This function  coincides with $\L_k{\G(k+d)}$ at the  integer points $\z=k$. Therefore, by \eqref{SnizuC.7} at the points $\z=k_l$, the function $ f(\z)$ satisfies
\begin{equation}\label{SnizuC.80}
   \log|f(k_l)|<\log\G( k_l/\b)-\g'\b^{-1} k_l \log ( k_l/\b)(1+o(1))\lesssim(1-\g'/\b)\log\G(k_l/\b).
\end{equation}

 By Lemma \ref{LemRD},  for $\z=\r e^{i\vf},$
\begin{equation}\label{SnizuC81}
    |f(\z)|=O\left(\int_0^\infty |V(r)|r^{2\r\cos\vf+2d-1} e^{-r^2}dr\right)=O(\G(\e\r\cos\vf)), \ \r \to +\infty
\end{equation}
for any $\e>0. $ Taking into account the asymptotics for the $\G$-function for large real values of argument, we obtain that the estimate \eqref{Cond.Eider1} is satisfied for any $a>0, c>0.$
Now, we choose  $\b>1$ so that $\g'/\b>1.$ After this, we fix $a,c>0$ so small that $\frac{\b-1}\b>a+c/2$ and $2c<\g'/\b-1$. Then, by Proposition \ref{PropInverse}, the sequence of integers  $\{\m_l\}$ obtained by the increasing reordering of the sequence $\{k_l\}$, satisfies the conditions \eqref{Cond.Eider2} and \eqref{Cond.Eider3}. So, all conditions of Theorem \ref{thmEider} are satisfied, and we can conclude that $f(\z)=0$ for all $\z.$

It remain to notice that $f(-\z)$ is the Mellin transform of the function $V(r)r^{2d-1}e^{-r^2}$, and by the inversion theorem for the Mellin transform we conclude that $V(r)\equiv0$, and this takes care of the proof  for the complex Bargmann space.

The proof for the space $\Bc^\R$ follows the reasoning above, only with $d$ replaced by $d-1$.

Now we consider the operator in the Helmholtz Bargmann space, where some  more changes are needed.

Similar to \eqref{SnizuC.1}, we suppose that \eqref{SnizuReal} is wrong, and this would mean that $n(\l)\le \g((d-1)!)^{-1}\left(\frac{|\log\l|}{\log|\log\l}\right)^{d-1}$
for small $\l$ and some $\g<1$. By repeating the calculations in \eqref{SnizuC.2}-\eqref{SnizuC.4}, we obtain for the distribution function $n_0(\l)$ (now, of the numbers $\L_k=\L_k^{\Hb}$) an estimate of the form \eqref{SnizuC.5}, with some $\g'>1.$ After this, we  apply again Proposition \ref{prop.reord.major} to obtain for a (sufficiently dense) subsequence $\L_{k_l}$ the estimate \eqref{SnizuC.7}.  Starting from this point, the reasoning is somewhat different.

We introduce the complex variable $\z=\x+i\y,\  \x\ge0,$ and consider the  function
\begin{equation}\label{sniHelm1}
    f(\z)=\G^2\left(\z+({d-2})/{2}\right)\int_0^\infty J^2_{\z+\frac{d-2}{2}}V(r)e^{-r^2}rdr.
\end{equation}
This function is analytical in the half-plane $\x>-\frac12$. It follows also  from \eqref{BeselIntRepr} that
\begin{equation}\label{sniHelm2}
    |  f(\z)|\le C\int_0^\infty r^{2(\z+d/{2})}|V(r)|e^{-r^2}dr,
\end{equation}
and by \eqref{RD2},   $f(\z)$ satisfies
\begin{equation}\label{sniHelm3}
   |f(\z)|=O(|\G(\e \z)|)
\end{equation}
for any $\e>0$ as $|\z|\to \infty$, $\re\z>-\frac12$.  It follows that the function $f(\z)$ satisfies condition \eqref{Cond.Eider1} of Theorem \ref{thmEider}, with arbitrarily small positive values of $a,c$.

After this property has been established, the proof follows the one for the  complex Bargmann space.
For real integer $\z=k$ we have $|f(k)|\asymp|\L_{k}| \G(k+(d-2)/2)$. Therefore, by \eqref{SnizuC.5}, the values of $f(\z) $
 at the points $\z=k_l$ satisfy \eqref{SnizuC.80}. We take $\b>1$ so close to $1$ that $\g'/\b>1$ and then fix $a,c$ so small that $\frac{\b-1}\b>a+c/2$ and $2c<\g'/\b-1$.  Then the sequence of integers $\m_l$ obtained by the increasing re-ordering of the sequence $k_l$, satisfies all conditions of Theorem \ref{thmEider}, and therefore $f(\z)\equiv 0.$  This means, in particular, that all numbers $\L_k$ are zeros, therefore $T_V=0.$     To prove that this implies $V=0$, we use  \eqref{Besselseries} to express $\int_0^\infty r^{2m} V(r)e^{-r^2} dr$ as a linear combination of the numbers $\L_k$, all of them being  equal to zero. Therefore we obtain that the function $g(\z)=\int_0^\infty r^{2\z+2d} V(r)e^{-r^2} dr$ takes zero values at all integer points. Finally, we note that, again by Lemma \ref{LemRD}, function $g(\z)$ satisfies the conditions of Theorem \ref{thmEider} with arbitrarily small positive $a,c$, and therefore $g\equiv 0$. The proof  that $V\equiv 0$ concludes again by the inversion theorem for the Mellin transform.

The case of the space $\Bc^{\Ab\Hb}$
 is proved in the same way, with minimal changes..
\end{proof}
\section{Not that rapidly decaying symbols. A counterexample}\label{Sect5}
The results of Sect.\ref{Sect4} might lead to the impression that, probably, one should expect  the absence of cancelation of the positive and negative  parts of the symbol in a more general situation as well. The author was of such opinion for a certain time. However, the example given in this Section shows that such impression is wrong.

We present here a construction of symbols that decay at infinity  rather fast but not sufficiently fast to get into the $\Rc\Dc$ class. This symbols oscillate very rapidly at infinity.  We show that the s-numbers of the Toeplitz operator with symbol $V$ decay essentially faster than the ones for the operator with symbol $|V|,$ so an analogy with theorems in Sect. \ref{Sect4} does not hold.  In order to simplify the calculations, we restrict ourselves here to the operators in the space $\Bc^\C$ in the one-dimensional case, $d=1.$  The same constructions work in any dimension, and for the spaces $\Bc^\R$ as well. We suppose that for other spaces under consideration a similar construction produces proper examples.

We consider the symbol 
\begin{equation}\label{Ncounter1}
   V(r)= V_{p,q}(r)=e^{-r^{2p}+r^2}\sin( r^{2q})
\end{equation}
with $p>1$, $q>p$.

\begin{theorem}\label{ThmNcounter}
For the operator $T_V$
\begin{equation}\label{NCounter2}
    \limsup_{\l\to 0}n(\l, T_V) \frac{\log|\log\l|}{|\log \l|}\le\frac{q}{q-1},
\end{equation}
while
\begin{equation}\label{NCounter3}
     n(\l, T_{|V|})\sim n(\l, T_{V_+})\sim n(\l,T_{V_+})\sim\frac{p}{p-1}\frac{|\log \l|}{\log|\log\l|}, \ \l\to 0.
\end{equation}
\end{theorem}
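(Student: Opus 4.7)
The plan is to exploit that $d=1$ in the complex Bargmann space forces each multiplicity $\db_k^\C=1$, so $n(\l,T_V)=\#\{k:|\L_k|>\l\}$ with no re-ordering difficulty. By \eqref{CBargm}, the factor $e^{r^2}$ in $V$ cancels the Bargmann weight $e^{-r^2}$, leaving
\begin{equation*}
\L_k=\frac{2}{\G(k+1)}\int_0^\infty e^{-r^{2p}}\sin(r^{2q})\,r^{2k+1}\,dr=\frac{I_k^+-I_k^-}{i\,\G(k+1)},\ \ I_k^{\pm}:=\int_0^\infty e^{-r^{2p}\pm ir^{2q}}r^{2k+1}\,dr,
\end{equation*}
and $I_k^-=\overline{I_k^+}$, so $|\L_k|\le 2|I_k^+|/\G(k+1)$. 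To prove \eqref{NCounter2} I would bound $|I_k^+|$ by rotating the contour of integration. The integrand $e^{-z^{2p}+iz^{2q}}z^{2k+1}$ is holomorphic in the sector $\{0<\arg z<\pi/(4p)\}$, and the assumption $q>p$ places the ray $\arg z=\pi/(4q)$ strictly inside it. For $z=Re^{i\vf}$ with $\vf\in[0,\pi/(4q)]$, the real part of the exponent equals $-R^{2p}\cos(2p\vf)-R^{2q}\sin(2q\vf)+(2k+1)\log R$, where both trigonometric factors are nonnegative, so the closing arcs contribute nothing as $R\to\infty$ and Cauchy's theorem applies. At $\th=\pi/(4q)$ one has $e^{iz^{2q}}=e^{-r^{2q}}$ on the rotated ray, hence
\begin{equation*}
|I_k^+|\le \int_0^\infty e^{-r^{2p}\cos(p\pi/(2q))-r^{2q}}r^{2k+1}\,dr\le \int_0^\infty e^{-r^{2q}}r^{2k+1}\,dr=\frac{1}{2q}\,\G\!\left(\frac{k+1}{q}\right).
\end{equation*}
Stirling then gives $\log|\L_k|\le -\tfrac{q-1}{q}k\log k+O(k)$, and inverting $\tfrac{q-1}{q}k\log k\lesssim|\log\l|$ yields $k\lesssim\tfrac{q}{q-1}|\log\l|/\log|\log\l|$, which is \eqref{NCounter2}.

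For \eqref{NCounter3} I would use a Laplace-type argument based on local averaging of the rapid oscillation of $\sin(r^{2q})$. After the substitution $v=r^{2p}$, with $\a=q/p>1$ and $\b=(k+1)/p$,
\begin{equation*}
\int_0^\infty e^{-r^{2p}}|\sin(r^{2q})|r^{2k+1}\,dr=\frac{1}{2p}\int_0^\infty e^{-v}|\sin(v^\a)|v^{\b-1}\,dv.
\end{equation*}
The Laplace peak of $e^{-v}v^{\b-1}$ is at $v\sim\b$ with Gaussian width $\asymp\sqrt{\b}$, while the period of $\sin(v^\a)$ near there is $\asymp(\a\b^{\a-1})^{-1}\ll\b^{-1/2}$ because $\a>1$. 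Partitioning the $v$-axis into intervals on which $v^\a$ increases by $\pi$ and replacing the smooth factor by its value at an interior point, the local average $\tfrac{1}{\pi}\int_0^\pi|\sin u|\,du=2/\pi$ factors out and yields $\int_0^\infty e^{-v}|\sin(v^\a)|v^{\b-1}\,dv\sim\tfrac{2}{\pi}\G(\b)$. Replacing $|\sin|$ by $\sin_\pm$ (with mean $1/\pi$) handles $V_\pm$. Thus $\log\L_k(|V|)\sim\log\L_k(V_\pm)\sim -\tfrac{p-1}{p}k\log k$, and inverting gives \eqref{NCounter3}. Because $q>p$ forces $\tfrac{q}{q-1}<\tfrac{p}{p-1}$, the counting functions in \eqref{NCounter2} and \eqref{NCounter3} have strictly different orders of growth, which is the content of the counterexample.

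The cleanest step is the contour rotation, which is essentially forced by the inequality $q>p$; the delicate part will be the Riemann-sum estimate. I need to control, uniformly in $\b$, the error in replacing $e^{-v}v^{\b-1}$ by a constant on each oscillation interval, and to show the accumulated error is $o(\G(\b))$. Outside a window of size $\sqrt{\b}\log\b$ around $v=\b$ the smooth factor is already super-exponentially small, so only the near-peak regime matters; inside the window, the relative variation of the smooth factor across one oscillation interval is $O(\b^{-(\a-1/2)})$, which summed over the $O(\b^{\a-1/2})$ relevant intervals gives $o(1)$, as needed. Once this bookkeeping is in place, the matching upper bound in \eqref{NCounter3} follows from monotonicity and Proposition \ref{PropReorderTriv}, while the lower bound is immediate from the averaged asymptotics above.
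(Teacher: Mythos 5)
Your proposal is correct and follows essentially the same route as the paper: your rotation of the $z$-contour by $\pi/(4q)$ is exactly the paper's rotation of the $t=r^{2q}$ integral onto the imaginary axis, and your local averaging of the oscillation against the slowly varying Laplace weight is the paper's lower bound obtained from the intervals where $\sin t\ge\frac12$ (the paper is content with a two-sided bound $\asymp\Gamma((k+2)/p)$ rather than your sharp constant $2/\pi$, which is all the logarithmic asymptotics require). The only cosmetic slip is that the oscillation period $\asymp\beta^{1-\alpha}$ should be compared with the peak width $\beta^{1/2}$ (it is $\ll\beta^{1/2}$, not $\ll\beta^{-1/2}$), and the accumulated error is controlled because each interval contributes its relative variation times the local integral of the weight, not by summing the relative variations themselves.
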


The difference in coefficients in front of $\frac{|\log \l|}{\log|\log\l|}$ in \eqref{NCounter2}, \eqref{NCounter3} transforms into a large difference in the decay order of the eigenvalues:
\begin{equation}\label{NCounterLog}
    \log s_n(T_V)\lesssim \frac{q(p-1)}{p(q-1)}\log s_n(T_{|V|}),
\end{equation}
so, in fact,  a rather strong cancelation takes place.
\begin{proof}
In order to prove \eqref{NCounter3}, we need to estimate the numerator in \eqref{CBargm}, i.e., the integral
\begin{equation}\label{NCounter4}
    \Ib(k)=\int_0^\infty e^{-r^{2p}+r^2}\sin\left(r^{2q}\right) e^{-r^2}r^{2k+1}dr.
\end{equation}
After the change of variables $t=r^{2q}$, \eqref{NCounter4} transforms to
\begin{equation}\label{NCounter5}
     \Ib(k)=(2q+1)^{-1}\int_0^\infty t^{\frac{k+2}{q}-1}e^{-t^{p/q}}\sin t dt=\Im (2q+1)^{-1}\int_0^\infty t^{\frac{k+2}{q}-1}e^{-t^{p/q}}\exp(it) dt .
\end{equation}
We consider here $t$ as complex variable living on the positive real half-line and the whole expression \eqref{NCounter5} as the integral in along this half-line. 

Now we replace the integration line by means of rotating it to   the line $\arg t=\pi/2$. The integral in \eqref{NCounter5} does not change due to the factor $e^{-t^{p/q}}$ which decays fast in the whole first quarter.
So, by setting $t=i\t$, $\t\in(0,1)$, we have
\begin{equation}\label{NCounter6}
    |\Ib(k)|\le (2q+1)^{-1}\int_0^\infty \t^{\frac{k+2}{q}-1}e^{-\t} d\t =\G((k+2)/q).
\end{equation}
Finally, taking into account the  expression for the denominator in \eqref{CBargm} and Stirling's formula  we arrive at \eqref{NCounter2}.

To prove \eqref{NCounter3}, we estimate the integral, say,
$$\Ib_+(k)=\int_0^\infty t^{\frac{k+2}{q}-1}e^{-t^{p/q}}\sin_+(t) dt$$
from below. To do this we consider  the intervals $I_j=(2\pi/3+4\pi j, 4\pi/3+4\pi j)$. On each of these intervals, for $j$ large enough, $\sin(t)>\frac12$. This inequality easily implies a lower estimate for $\Ib_+(k):$
$$\Ib_+(k)\ge\sum_j \frac12 \int_{I_j}{\frac{k+2}{q}-1}e^{-t^{p/q}}\sin_+(t) dt \ge C\int _0^\infty t^{\frac{k+2}{q}-1}e^{-t^{p/q}}dt=C \G\left(\frac{k+2}{p}\right),$$
which leads to \eqref{NCounter3}.
\end{proof}

\end{document}